\def\BState{\State\hskip-\ALG@thistlm}
\algnewcommand\And{\textbf{and} }
\newtheorem{theorem}{Theorem}[subsection]
\newtheorem{lemma}[theorem]{Lemma}
\newtheorem{corollary}[theorem]{Corollary}
\newtheorem{proposition}[theorem]{Proposition}
\theoremstyle{definition}
\newtheorem{definition}[theorem]{Definition}
\newtheorem{example}[theorem]{Example}
\def\n{\noindent}
\title{Alternating Signed Bipartite Graphs and Difference-1 Colourings}
\author{Cian O'Brien$^1$}
\author{Kevin Jennings}
\author{Rachel Quinlan}
\affil{National University of Ireland, Galway}
\begin{document}
\setlength{\parindent}{0pt}
\maketitle
\hspace{1cm}\footnotesize $^1$ Funded by Hardiman Research Scholarship \normalsize
\begin{abstract}
\noindent We investigate a class of 2-edge coloured bipartite graphs known as \emph{alternating signed bipartite graphs (ASBGs)} that encode the information in \emph{alternating sign matrices}. The central question is when a given bipartite graph admits an ASBG-colouring; a 2-edge colouring such that the resulting graph is an ASBG. We introduce the concept of a difference-1 colouring, a relaxation of the concept of an ASBG-colouring, and present a set of necessary and sufficient conditions for when a graph admits a difference-1 colouring. The relationship between distinct difference-1 colourings of a particular graph is characterised, and some classes of graphs for which all difference-1 colourings are ASBG-colourings are identified. One key step is Theorem \ref{multimatching}, which generalises Hall's Matching Theorem by describing a necessary and sufficient condition for the existence of a subgraph $H$ of a bipartite graph in which each vertex $v$ of $H$ has some prescribed degree $r(v)$.
\end{abstract}

\section{Introduction}

In this article, we develop a theme introduced by Brualdi, Kiernan, Meyer, and Schroeder in \cite{brualdibib}, by investigating a class of bipartite graphs related to \emph{alternating sign matrices}. In general, we may associate to any matrix the bipartite graph whose vertices correspond to rows and columns, and where an edge between the vertices representing Row $i$ and Column $j$ encodes the information that the $(i,j)$ entry is non-zero. Additional features of the non-zero entries (for example, sign) might be indicated by assigning colours to the edges. In the case of alternating sign matrices, the special matrix structure translates to particular combinatorial properties of the resulting bipartite graphs. We introduce the main objects of interest in this opening section.

\subsection{Alternating Sign Matrices and Alternating Signed Bipartite Graphs}

\begin{definition}
An \emph{alternating sign matrix (ASM)} is a ${(0,1,-1)}$-matrix in which all row and column sums are $1$, and the non-zero elements in each row and column alternate in sign.
\end{definition}

It is easily observed that permutation matrices are examples of ASMs, and there are contexts in which the concept of an ASM arises as a natural extension of a permutation. Alternating sign matrices were first investigated by Mills, Robbins, and Rumsey \cite{asmconjecturebib}, who observed their connection to a variant of the ordinary determinant function related to the technique of \emph{Dodgson condensation} \cite{dodgsonbib}. In their construction, the role of ASMs is similar to that of permutations in the usual definition of the determinant. This motivated the problem of enumerating the ASMs of size $n \times n$, leading to the \emph{Alternating Sign Matrix Conjecture}, namely that this number is
\[\frac{1!4!7! \ldots (3n-2)!}{n!(n+1)! \ldots (2n-1)!}\text{.}\]
Independent and very different proofs were published in 1996 by Zeilberger \cite{asmproof1bib} and Kuperberg \cite{asmproof2bib}, respectively using techniques from enumerative combinatorics and from statistical mechanics. Zeilberger's article establishes that the number of $n \times n$ ASMs is equal to the number of \emph{totally symmetric self-complementary plane partitions} in a $2n \times 2n \times 2n$ box, and the connection between these two classes of objects is further explored by Doran \cite{tsscppbib}. The connection to physics arises from the \emph{square ice} model for two-dimensional crystal structures; square patches of which correspond exactly to alternating sign matrices. Interest in alternating sign matrices intensified following the discovery of these deep connections between apparently disparate fields. A detailed and engaging account of the resolution of the alternating sign matrix conjecture can be found in the book by Bressoud \cite{bressoudbib}.

\*

Recent developments in the study of ASMs include an investigation of their spectral properties in \cite{spectralbib}, an extension of the concept of Latin square arising from the replacement of permutation matrices by ASMs in \cite{latinsquarebib}, and a study of some graphs arising from ASMs in \cite{brualdibib}. This latter article introduces the concept of the \emph{alternating signed bipartite graph (ASBG)} of an ASM, which is constructed as follows. The graph has a vertex for each row and each column of the matrix. Edges in the graph represent non-zero entries; the vertices corresponding to Row $i$ and Column $j$ are adjacent if and only if the entry in the $(i,j)$-position of the matrix is non-zero. Edges are marked as positive or negative, respectively represented by blue and red edge colours, according to the sign of the corresponding matrix entry.

\begin{example}\label{3x3_ASBGs}
All seven $3 \times 3$ ASMs and their corresponding ASBGs (up to isomorphism) are shown below:

\includegraphics[width = \textwidth]{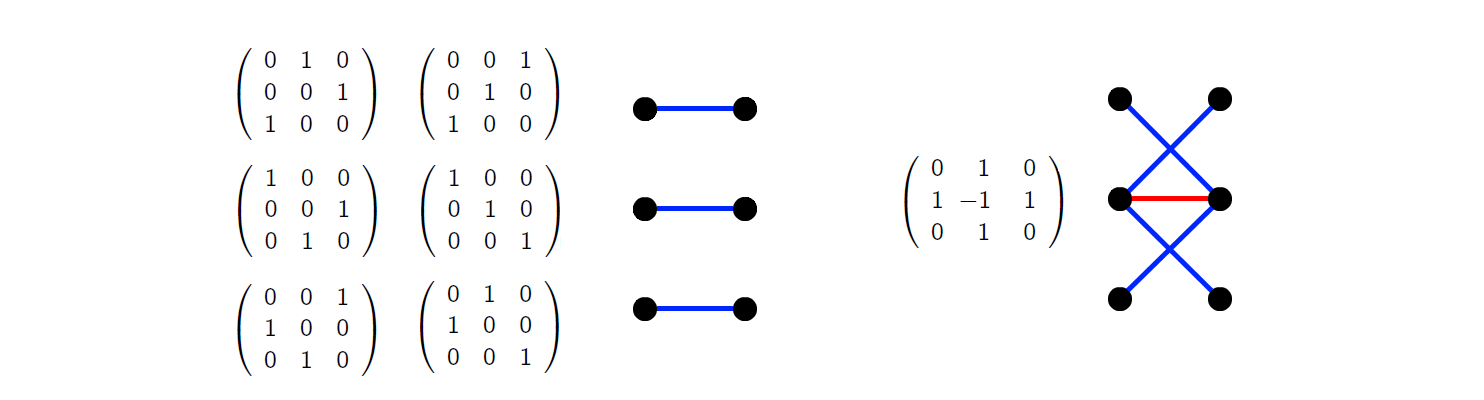}

\n\textsc{Note:} ASMs $A$ and $B$ correspond to the same ASBG if $B = PAQ$ for permutation matrices $P$ and $Q$.
\end{example}

From the viewpoint of bipartite graphs, we propose the following abstraction of the previous definition.
 
\begin{definition} \label{asbg_def} An \emph{alternating signed bipartite graph (ASBG)} is a bipartite graph ${G}$ with no isolated vertices and edges coloured blue and red, for which there exists an ordering of the vertices of ${G}$ such that, for each vertex $u$ of $G$ with neighbours ordered $v_1, v_2, \dots, v_k$, the edges $uv_1, uv_2, \dots, uv_k$ alternate in colour, starting and ending with blue.\end{definition}

Suppose that $G$ is an ASBG in the sense of Definition \ref{asbg_def}, with a bipartition $(P_1,P_2)$ of its vertex set. Let $u_1,\dots ,u_k$ be the vertices of $P_1$ and let $v_1,\dots ,v_l$ be the vertices of $P_2$. It is easily confirmed that the following assignment determines a  $k\times l$ alternating sign matrix $A(G)$.
$$
A(G)_{ij} = \left\{\begin{array}{rl} 1 & \mathrm{if\ }u_iv_j\ \mathrm{is\ a\ blue\ edge\ in}\ G \\
-1 & \mathrm{if\ }u_iv_j\ \mathrm{is\ a\ red\ edge\ in}\ G \\
0 & \mathrm{if\ }u_iv_j\ \mathrm{is \ not \ an\ edge\ in}\ G \\
\end{array}\right.
$$
Since every row and every column of $A(G)$ has entries summing to 1, the sum of all entries of $A(G)$ is equal both to $k$ and $l$. Thus $k=l$ and $G$ is balanced.

\*

Our main theme in this article is the problem of determining whether a given graph admits a 2-edge-colouring with respect to which it is an ASBG. The article is organised as follows. In Section 2, we introduce the concept of an ASBG-colouring, and a relaxation of this, which we refer to as a difference-1 colouring. We show that these two are equivalent in the case of a graph in which no edge belongs to multiple cycles. In Section 3, we establish general criteria for a graph to admit a difference-1 colouring, establishing a generalisation of Hall's Matching Theorem in the process. In the final section, we consider when a bipartite graph may admit multiple difference-1 colourings, and introduce some variants of such colourings.
\newpage
\section{ASBG-Colourings}

In this section, we consider properties of edge-colourings compatible with an ASBG structure.

\subsection{Obstacles to ASBG-Colourability}

We define a \emph{colouring} ${c}$ of a graph ${G}$ to be a function ${c: E(G) \rightarrow \{r,b\}}$, and denote the graph ${G}$ endowed with the colouring ${c}$ by ${G^c}$. If ${H}$ is a subgraph of ${G}$, then ${c}$ restricts to a colouring ${c_H}$ of ${H}$. We denote by ${H^c}$ the graph ${H}$ with edges coloured according to ${c_H}$.

If $G^c$ is an ASBG and $v$ is a vertex of $G^c$, then the numbers ${deg^B(v)}$ and ${deg^R(v)}$ of blue and red edges incident with ${v}$, respectively, must satisfy the following relation for all vertices $v$:
\begin{equation}\label{degB-degR} deg^B(v) - deg^R(v) = 1 \text{.} \end{equation}

\begin{definition} A 2-edge colouring $c$ of a graph $G$ is an \emph{ASBG-colouring} of $G$ if $G^c$ is an ASBG. The graph $G$ is {ASBG-colourable} if there exists an ASBG-colouring of $G$.\end{definition}

The following are some basic necessary (but not sufficient) criteria that a graph ${G}$ must meet for it to be ASBG-colourable:
\begin{itemize}
\item ${G}$ must be bipartite and balanced.
\item The degree of each vertex in ${G}$ must be odd, from \eqref{degB-degR}.
\end{itemize}

\begin{example}\label{unfeasible}
Let $G$ be the following graph:

\includegraphics[width = \textwidth]{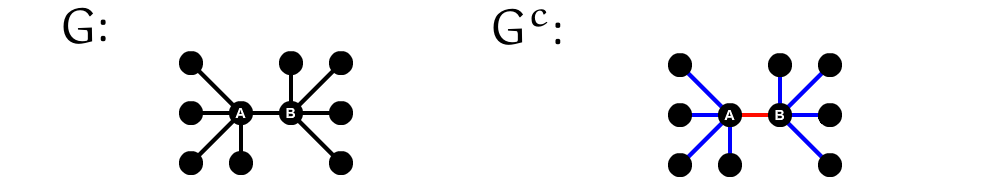}

We attempt to colour the edges of $G$ with a colouring $c$, so that \eqref{degB-degR} is satisfied at every vertex:
\begin{itemize}
\item All edges incident with vertices of degree ${1}$ must be blue.
\item Now $4$ out of $5$ edges incident with vertices $A$ and $B$ are blue, so it is not possible for $G^c$ to satisfy \eqref{degB-degR} at each vertex.
\end{itemize}

Therefore ${G}$ is not ASBG-colourable.
\end{example}

\begin{example}\label{unconfigurable}
Now consider the following graph ${G}$ with edge-colouring ${c}$.

\includegraphics[width = \textwidth]{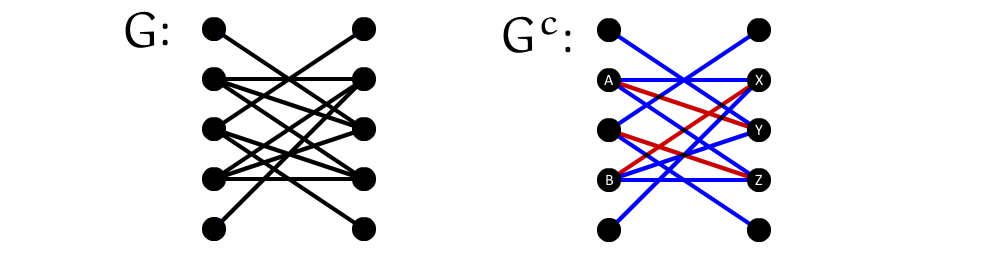}

We observer that $G$ has a unique colouring that satisfies \eqref{degB-degR} at every vertex. However, we claim that there is no ordering of the vertices that satisfies the condition of Definition \ref{asbg_def} for this colouring. The conditions of Definition \ref{asbg_def}, applied to the vertex $A$, requires that $Y$ occurs between $X$ and $Z$ in an ordering of the vertices of $G$. The same condition applied to $B$ requires that $X$ occurs between $Y$ and $Z$. Since these requirements are incompatible, we conclude that $G$ is not ASBG-colourable.
\end{example}

Examples \ref{unfeasible} and \ref{unconfigurable} demonstrate two different ways in which a graph can fail to be ASBG-colourable. If a graph ${G}$ has an edge colouring ${c}$ that satisfies \eqref{degB-degR} at every vertex, we say that ${G}$ has a \emph{difference-${1}$ colouring}, and if the vertices of ${G^c}$ can be ordered such that the edges incident with each vertex alternate in colour, beginning and ending with blue, we say that ${G^c}$ is \emph{configurable}.

\*

In this paper, we will present necessary and sufficient conditions for a given graph to have a difference-1 colouring, as well as classes of graphs which are configurable for any difference-1 colouring. We will also give a generalisation of \emph{Hall's Matching Theorem}, which will be needed further in the paper.


\subsection{Difference-$1$ Colourings}

\begin{definition}\label{dif-1_def} A 2-edge colouring $c$ of a graph ${G}$ is a \emph{difference-$1$ colouring} of $G$ if $G^c$ satisfies $deg^B(v)-deg^R(v) = 1$ at every vertex $v$.\end{definition}

\n\textsc{Note:} We have introduced the concept of a difference-1 colouring specifically to address the question of ASBG-colourability. Therefore, we will consider the existence of difference-1 colourings only for balanced bipartite graphs.

\begin{definition}\label{config_def} A difference-1 colouring ${c}$ of a graph $G$ is \emph{configurable} if $G^c$ satisfies the conditions of Definition \ref{asbg_def}.\end{definition}

\n\textsc{Note:} A graph has a difference-1 colouring if and only if each of its connected components has a difference-1 colouring. Similarly, a graph is an ASBG if and only if each of its connected components is. Therefore, we will consider the existence of difference-1 and ASBG-colourings only for connected graphs, and so all graphs from now on should be assumed to be connected, balanced, and bipartite, unless otherwise stated.

\*

We remark that the property of configurability can be usefully visualised in terms of embedding the vertices in the plane along two parallel lines so that the edges incident with each vertex alternate in colour, as in the diagrams in Examples \ref{3x3_ASBGs} and \ref{unconfigurable}.

\begin{lemma} Let ${G}$ be a bipartite graph with a difference-1 colouring. Then ${G}$ is balanced. \end{lemma}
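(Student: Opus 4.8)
The plan is to argue by a double-counting of the edges, exploiting the fact that in a bipartite graph every edge has exactly one endpoint in each part. Fix a bipartition $(P_1, P_2)$ of the vertex set of $G$, and let $B$ and $R$ denote the total numbers of blue and red edges of $G^c$ respectively. The goal is to show $|P_1| = |P_2|$, and I would obtain this by evaluating the quantity $\sum_{v} (\deg^B(v) - \deg^R(v))$ restricted to each part.

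First I would sum the defining relation $\deg^B(v) - \deg^R(v) = 1$ of the difference-1 colouring over all $v \in P_1$, giving $\sum_{v \in P_1} \deg^B(v) - \sum_{v \in P_1} \deg^R(v) = |P_1|$. The key step is the observation that, because $G$ is bipartite, each blue edge has precisely one endpoint in $P_1$, so $\sum_{v \in P_1} \deg^B(v) = B$, and likewise $\sum_{v \in P_1} \deg^R(v) = R$. Hence $B - R = |P_1|$. Repeating the identical argument over $P_2$ yields $B - R = |P_2|$.

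Comparing the two expressions gives $|P_1| = |P_2| = B - R$, so $G$ is balanced. I do not anticipate any genuine obstacle here: the argument is an elementary double count, and the only point requiring care is the justification that summing a vertex degree over one part of a bipartite graph recovers the global edge count of the appropriate colour. This is essentially the same counting principle used earlier to show that the matrix $A(G)$ associated with an ASBG is square, now applied at the level of the colouring rather than the matrix.
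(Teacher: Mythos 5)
Your proposal is correct and follows essentially the same argument as the paper: summing $\deg^B(v) - \deg^R(v) = 1$ over each part of the bipartition, using the fact that every edge has exactly one endpoint in each part, to conclude $|P_1| = B - R = |P_2|$. The only difference is that you spell out explicitly the double-counting justification that the paper leaves implicit, which is a fine addition but not a different proof.
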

\begin{proof}
Let $c$ be a difference-1 colouring of $G$, and let ${b}$ and ${r}$ be the number of blue and red edges in ${G^c}$, respectively. Let ${(P_1, P_2)}$ be the bipartition of ${V(G)}$. Each vertex ${v}$ of ${P_1}$ satisfies ${deg^B(v) - deg^R(v) = 1}$. Summing this expression over all ${v \in P_1}$, we have ${|P_1| = b - r}$. Similarly, ${|P_2| = b - r}$. Therefore ${|P_1| = |P_2|}$, so ${G}$ is balanced.
\end{proof}

\begin{definition} A \emph{leaf} is a vertex of degree ${1}$.\end{definition}
\begin{definition} A \emph{twig} is a configuration of three vertices, consisting of two leaves incident with the third vertex of the twig (called the base of the twig), which has degree ${3}$. \end{definition}

One useful property of leaves and twigs is that the colouring of their edges in any difference-1 colouring is uniquely determined. All edges incident with leaves must be coloured blue, which means that the remaining edge incident with the base of a twig must be coloured red, to satisfy ${(\ref{degB-degR})}$.

\begin{definition} A \emph{leaf-twig configuration} at a vertex ${v}$ is a configuration of four vertices (distinct from $v$), consisting of a leaf and a twig, where the base of the twig and the leaf are both incident with ${v}$.\end{definition}

\n\textsc{Note:} We refer to the operation of deleting the four vertices of a leaf-twig configuration, and their four incident edges, as \emph{removing a leaf-twig configuration from G}. We also refer to the operation of adding four vertices to ${G}$ in such a configuration, with the leaf and the base incident with a vertex ${v}$ of ${G}$, as \emph{adding a leaf-twig configuration to ${G}$ at ${v}$}.

\includegraphics[width = \textwidth]{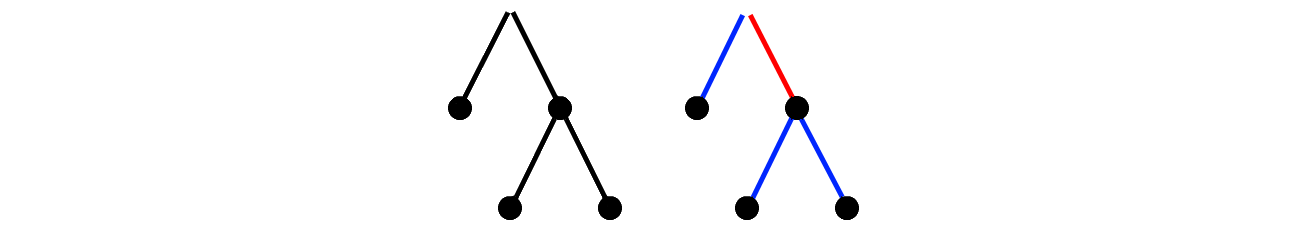}
\begin{center}
{\footnotesize A leaf-twig configuration with its only difference-1 colouring.}
\end{center}

\begin{lemma}\label{leaf-twig_removal_lemma} Let $G$ and $G'$ be graphs with the property that $G'$ is obtained from $G$ by the addition of a leaf-twig configuration. Then $G'$ has a difference-1 colouring if and only if $G$ has a difference-1 colouring.
\end{lemma}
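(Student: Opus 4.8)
The plan is to prove both implications by exploiting the fact, already observed in the paper for leaves and twigs, that a leaf-twig configuration admits no freedom at all in its colouring. First I would record the precise forced colouring. Writing $\ell$ for the added leaf, $b$ for the twig base, and $\ell_1,\ell_2$ for the two twig leaves, the four new edges are $v\ell$, $vb$, $b\ell_1$, $b\ell_2$. Since every edge incident with a leaf must be blue, the edges $v\ell$, $b\ell_1$, $b\ell_2$ are all blue; the base $b$ then already carries two blue edges, so \eqref{degB-degR} at $b$ forces $vb$ to be red. The key consequence is that, relative to $v$, the configuration contributes exactly one blue edge and one red edge, while contributing the correct value $1$ to $deg^B - deg^R$ at each of the four new vertices.

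For the ``if'' direction, given a difference-1 colouring $c$ of $G$, I would extend it to $G'$ by colouring the four new edges as just described. The three added degree-$1$ vertices and the base $b$ satisfy \eqref{degB-degR} by construction, and every vertex of $G$ other than $v$ is untouched. At $v$ the configuration adds one blue and one red edge, so $deg^B(v) - deg^R(v)$ is unchanged from its value in $G^c$ and remains equal to $1$. Hence the extension is a difference-1 colouring of $G'$.

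For the ``only if'' direction, given a difference-1 colouring $c'$ of $G'$, I would restrict it to $G$ by deleting the four configuration vertices and their incident edges. By the forced-colouring observation, the edges deleted at $v$ are precisely one blue edge and one red edge, so removing them leaves $deg^B(v) - deg^R(v)$ unchanged and equal to $1$, while every other vertex of $G$ retains both its colouring and its difference. Hence the restriction is a difference-1 colouring of $G$.

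I do not expect a serious obstacle: the whole argument rests on the single observation that the configuration is colour-forced and ``balanced'' at $v$, contributing one edge of each colour. The only point requiring a little care is to confirm that the four configuration vertices are genuinely new --- distinct from $v$ and from one another --- so that addition and deletion are mutually inverse and no edge of $G$ is inadvertently affected; this is guaranteed directly by the definition of the leaf-twig configuration.
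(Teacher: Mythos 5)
Your proposal is correct and follows essentially the same approach as the paper, which likewise rests on the observation that the leaf-twig configuration's colouring is forced and contributes exactly one edge of each colour at $v$, so difference-1 colourings extend and restrict between $G$ and $G'$. Your write-up simply makes explicit the forced-colouring details that the paper's one-paragraph proof (and its earlier remark on leaves and twigs) leaves implicit.
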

\begin{proof}
Since the addition of a leaf-twig configuration involves one additional edge of each colour at a single vertex of $G$, it is easily observed that any difference-$1$ colouring of $G$ extends in a unique way to a difference-$1$ colouring of $G'$. On the other hand, any difference-$1$ colouring of $G'$ restricts to a difference-$1$ colouring of $G$.
\end{proof}

A consequence of Lemma \ref{leaf-twig_removal_lemma} is that repeated addition and/or removal of leaf-twig configurations does not affect the status of a graph with respect to the existence of a difference-$1$ colouring.

\*

\n\textsc{Note:} For any graph with a difference-1 colouring that is not configurable, it is possible to add leaf-twig configurations so the resulting graph is configurable. Adding leaf-twig configurations to a configurable graph will always result in a configurable graph. 

\begin{definition} Given a graph ${G}$, we define the \emph{reduced form} of ${G}$ be the graph that results from deleting leaf-twig configurations from ${G}$ until none remain. We also refer to a graph as \emph{reduced} if it possesses no leaf-twig configuration. \end{definition}

\n\textsc{Note:} Although the reduced form of $G$ is not necessarily uniquely determined as a subgraph of $G$, it is uniquely determined as a graph, up to isomorphism.

\*

It is a consequence of Lemma \ref{leaf-twig_removal_lemma} that ${G}$ has a difference-1 colouring if and only if its reduced form has a difference-1 colouring.

\subsection{Configurability for Cactus Graphs}\label{cactus_section}

In this section, we examine a class of graphs for which any graph with a difference-1 colouring is configurable; namely \emph{cactus graphs}.

\begin{definition} A \emph{cactus graph} is a connected graph in which any pair of cycles share at most one vertex. \end{definition}

Equivalently, a cactus graph is a connected graph in which any edge belongs to at most one cycle. Our main result in this section is Theorem \ref{ced_difference-1_is_asbg}, which asserts that for a cactus graph, the existence of a difference-1 colouring is a sufficient condition for ASBG-colourability. Lemma \ref{cactus} provides the key technical ingredient in the proof of our main theorem in this section. Our proof of this statement makes use of a partition of the vertex set of a bipartite cactus graph, which is explained in the following lemma. We denote the minimum distance in a graph between vertices $v$ and $u$ by $d(v,u)$.

\begin{lemma}\label{cactus}
Let $v$ be a vertex of a bipartite cactus graph $G$. Write $k = \max_{u\in V(G)}d(v,u)$, and for $i=0,\dots ,k$ define
$$
V_i = \{u\in V(G): d(v,u)=i\}.
$$
\begin{enumerate}
\item
If $1\le i\le k$ and $x\in V_i$, then $x$ has at most two neighbours in $V_{i-1}$. 
\item
If $1\le i\le k-1$ and $x\in V_i$, then there is at most one vertex $y$ in $V_i$, distinct from $x$, for which $x$ and $y$ have a common neighbour in $V_{i+1}$. Moreover, if such a vertex $y$ exists, then $x$ has only one neighbour in $V_{i-1}$. 
\end{enumerate}
\end{lemma}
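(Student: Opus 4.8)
The plan is to work with the distance layers $V_0,\dots,V_k$ and to exploit two structural facts about a bipartite cactus $G$. First, since $G$ is bipartite, adjacent vertices lie in consecutive layers, so every edge joins some $V_{j-1}$ to $V_j$ and no edge lies within a layer; in particular adjacent vertices differ in distance from $v$ by exactly $1$. Second, I would set up a \emph{common-cycle lemma}: if a vertex $w$ has two distinct neighbours $a,b$ with $d(v,a)=d(v,b)=d(v,w)-1$, then $wa$ and $wb$ lie on a common cycle. To see this, take shortest paths $P_a,P_b$ from $v$ to $a$ and to $b$; both avoid $w$ (they meet only layers $0,\dots,d(v,w)-1$), so $a$ and $b$ are joined by a path $Q$ in $G-w$, and $Q$ together with $wa,wb$ is a cycle through both edges. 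Since a cactus has every edge on \emph{at most one} cycle, this cycle is uniquely determined by either of its two edges at $w$.

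Part 1 then follows quickly. If $x\in V_i$ had three neighbours $a,b,c\in V_{i-1}$, the common-cycle lemma would give a cycle through $\{xa,xb\}$ and a cycle through $\{xa,xc\}$; as these share the edge $xa$, uniqueness forces them to coincide, and the resulting cycle would then carry three edges $xa,xb,xc$ at $x$, which is impossible. Hence $x$ has at most two neighbours in $V_{i-1}$.

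The key technical step, which I expect to be the main obstacle, is a \emph{monotonicity} statement: every cycle $C$ of $G$ has a unique vertex nearest $v$, and $d(v,\cdot)$ is strictly unimodal along $C$ (a unique local minimum and a unique local maximum). I would prove this from the block structure of a cactus: each block is a bridge or a cycle, the block--cut tree is a tree, and so there is a unique cut vertex $c\in C$ through which every $v$--$C$ path passes (with $c=v$ if $v\in C$). Since $c$ separates $C\setminus\{c\}$ from $v$, this yields $d(v,u)=d(v,c)+d_C(c,u)$ for all $u\in C$, where $d_C$ is the along-cycle distance; and on an even cycle $d_C(c,\cdot)$ has its only minimum at $c$ and its only maximum at the antipode of $c$. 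Two consequences I would record: any $z\in V_{i+1}$ whose two neighbours on a cycle both lie in $V_i$ is the unique maximum of that cycle; and any cycle vertex other than the nearest and farthest vertices has exactly one cycle-neighbour closer to $v$ and one farther (so a vertex adjacent to the maximum has its other cycle-neighbour one step closer to $v$).

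Finally I would assemble Part 2. Suppose $x\in V_i$ shares a neighbour $z\in V_{i+1}$ with some $y\in V_i\setminus\{x\}$; then $x,y$ are the two neighbours of $z$ on a cycle $C_z$ through $xz,zy$, and $z$ is its unique maximum. For the \emph{moreover} clause, monotonicity gives $x$ a neighbour $s\in V_{i-1}$ on $C_z$; a second neighbour $s'\in V_{i-1}$ would, by the common-cycle lemma and uniqueness, force $s'$ onto $C_z$ as well, producing a third $C_z$-edge at $x$ — impossible; so $x$ has exactly one neighbour in $V_{i-1}$. For the uniqueness of the partner, suppose $z_1,z_2\in V_{i+1}$ gave partners $y_1\neq y_2$ with cycles $C_1,C_2$. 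If $z_1=z_2$, that vertex would have the three neighbours $x,y_1,y_2$ in $V_i$, contradicting Part 1, so $z_1\neq z_2$. If $C_1=C_2$, this cycle would have the two distinct maxima $z_1,z_2$, contradicting monotonicity. If $C_1\neq C_2$, then by the cactus property they meet only at $x$, so the down-neighbours $s_1\in V_{i-1}\cap C_1$ and $s_2\in V_{i-1}\cap C_2$ of $x$ are distinct; the common-cycle lemma applied to $x$ with $s_1,s_2$ produces a cycle through $xs_1,xs_2$ which, by uniqueness, equals both $C_1$ and $C_2$, forcing $C_1=C_2$ — a contradiction. Hence at most one partner $y$ exists, completing Part 2.
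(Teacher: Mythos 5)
Your proof is correct, but it reaches the conclusion by a genuinely different route from the paper's, most visibly in part 2. The paper argues entirely locally from the definition of a cactus: each neighbour of $x$ in $V_{i-1}$ begins a path to $v$, and each shared upper neighbour $z_j$ leads back down to $v$ through its partner $y_j$, so two partners would place the single edge $xu$ (with $u\in V_{i-1}$) on two distinct cycles, and a second downward neighbour $u_2$ would likewise place $u_1x$ on two distinct cycles --- immediate contradictions, with essentially no case analysis and no auxiliary theory. You instead factor everything through a structural lemma: by the block decomposition of a cactus, every cycle $C$ has a unique entry vertex $c$ as seen from $v$, hence $d(v,\cdot)$ is strictly unimodal along $C$; a shared upper neighbour is then the unique distance-maximum of its cycle, and both claims of part 2 follow from the case analysis $z_1=z_2$, $C_1=C_2$, $C_1\neq C_2$. (Your part 1 and your common-cycle lemma are essentially the paper's construction rephrased in terms of uniqueness of the cycle through a given edge rather than ``an edge on two cycles''.) The trade-off: the paper's proof is shorter and self-contained, needing neither block--cut trees nor the fact that cycle blocks realize graph distance along the cycle --- a step you assert when writing $d(v,u)=d(v,c)+d_C(c,u)$ but do not justify, since separation only gives $d(v,u)=d(v,c)+d_G(c,u)$, and the equality $d_G(c,u)=d_C(c,u)$ requires maximality of the block (a bypass between two distinct vertices of $C$ would put an edge of $C$ on two cycles); this is a true and easily repaired point, not a genuine gap, but it should be stated. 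What your route buys is a sharper, reusable picture --- a unique nearest and farthest vertex on every cycle, with distances strictly unimodal along it --- which makes the lemma's statements transparent and matches the geometric intuition behind the layer-by-layer configuration procedure in the proof of Theorem \ref{ced_difference-1_is_asbg}.
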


\begin{proof}
We note that $V(G)$ is the disjoint union of the sets $V_i$, and that, since $G$ is bipartite, the neighbours of any vertex in $V_i$ belong to $V_{i-1}\cup V_{i+1}$, for $1\le i< k$. 

For item 1, suppose that $x$ has distinct neighbours $u_1$, $u_2$, $u_3$ in $V_{i-1}$. Each of the edges $xu_1,\ xu_2,\ xu_3$ is the intial edge of a path in $G$ from $x$ to $v$. It follows that  $xu_2$ belongs to two distinct cycles, one including the edge $xu_1$ and one including the edge $xu_3$, contrary to the hypothesis that $G$ is a cactus graph.

For item 2, suppose that $y_1$ and $y_2$ are two vertices of $V_i$, distinct from $x$, with the property that $z_1$ is a common neighbour of $y_1$ and $x$ in $V_{i+1}$, and $z_2$ is a common neighbour of $y_2$ and $x$ in $V_{i+1}$. Let $u$ be a neighbour of $x$ in $V_{i-1}$. Then the edge $xu$ belongs to two distinct cycles in $G$, one including the edges $z_1x$ and $z_1y_1$, and the other including the edges $z_2x$ and $z_2y_2$. 

Thus at most one element of $V_i\backslash\{x\}$ shares a neighbour $z$ with $x$ in $V_{i+1}$. Suppose that $y$ is such a vertex, and that $x$ has two neighbours $u_1$ and $u_2$ in $V_{i-1}$. Then the edge $u_1x$ belongs to a cycle in $G$ that includes the edges $xz$ and $zy$, and also to a cycle in $G$ that includes the edge $xu_2$ and no vertex of $V_{i+1}$. From this contradiction we conclude that if $x$ shares a neighbour in $V_{i+1}$ with another vertex of $V_i$, then $x$ has only one neighbour in $V_{i-1}$. 
\end{proof}

\begin{theorem}\label{ced_difference-1_is_asbg} Let ${G}$ be a cactus graph with difference-1 colouring ${c}$. Then ${G^c}$ is configurable. \end{theorem}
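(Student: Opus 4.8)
The plan is to use the distance partition $V_0, V_1, \dots, V_k$ of Lemma \ref{cactus}, rooted at an arbitrary vertex $v$, and to build the vertex ordering required by Definition \ref{asbg_def} by processing the layers outward from $v$, inserting each layer $V_{i+1}$ into a partially constructed ordering once $V_0 \cup \cdots \cup V_i$ has been placed. Since $G$ is bipartite, the layers alternate between the two parts of the bipartition, so ordering all of $V(G)$ amounts to producing one linear order on the even-indexed layers and one on the odd-indexed layers; crucially, the alternation condition at a vertex $x \in V_i$ constrains only the relative order of its neighbours, all of which lie in $V_{i-1} \cup V_{i+1}$. The colouring $c$ is fixed, so the task is purely to find an ordering consistent with all of these local alternation constraints simultaneously.

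First I would record the consequences of the difference-$1$ hypothesis together with Lemma \ref{cactus}: every vertex has odd degree by \eqref{degB-degR}, and each $x \in V_i$ with $i \ge 1$ has at most two neighbours in $V_{i-1}$. In particular a vertex of $V_k$, having neighbours only in $V_{k-1}$, must have degree $1$ and hence is a leaf, and more generally the ``upward'' degree of every vertex is at most $2$. This means that, viewed from the root, $G$ behaves like a tree except for controlled reconnections: by item 2 of Lemma \ref{cactus}, within a layer $V_i$ a vertex $x$ shares a common child in $V_{i+1}$ with at most one other vertex $y$, and when it does, $x$ has a unique neighbour in $V_{i-1}$. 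These reconnections are precisely the ``bottoms'' of the cycles of the cactus.

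The inductive construction then maintains, as its invariant, a linear order of the already-placed vertices in which the alternation condition of Definition \ref{asbg_def} holds for every vertex all of whose neighbours have been placed, and in which each frontier vertex $x \in V_i$ retains a designated window into which its children may be inserted so as to complete its own pattern $BRB\cdots B$. When $x$ has a single parent, which is the generic tree-like case, the children of $x$ together with the subtrees hanging from them can be arranged to the two sides of the parent edge to realise this pattern, so the invariant extends without difficulty. The delicate cases are a vertex $x$ with two parents, which closes a cycle, and a pair $x, y$ sharing a common child $z$. Here item 2 of Lemma \ref{cactus} is decisive: a shared child forces a unique parent, so the two sources of constraint never stack, and the betweenness requirements imposed on $z$ by $x$ and by $y$ can each be met by placing $z$ between the appropriate blue neighbours of each. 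It is exactly the stacking of incompatible betweenness constraints, as in Example \ref{unconfigurable} where the non-cactus graph simultaneously demands ``$Y$ between $X$ and $Z$'' and ``$X$ between $Y$ and $Z$'', that the cactus hypothesis forbids.

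The main obstacle I anticipate is verifying this consistency rigorously at the cycle closures: I must show that when two branches descending from a common ancestor reconnect, the orders chosen independently along the two branches can be reconciled into a single order satisfying the alternation condition at every vertex of the cycle at once. I expect to handle this by treating each cycle as a unit, laying out one side and then the other in the reversed sense so that the shared bottom vertex and the branching vertex each see their neighbours in the correct alternating order, and invoking items 1 and 2 of Lemma \ref{cactus} to guarantee that no vertex is subject to two conflicting cycle constraints. Formalising the bookkeeping of these insertion windows, rather than any single geometric idea, is where the real work will lie.
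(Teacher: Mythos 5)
Your proposal takes essentially the same approach as the paper's proof: both root the graph at an arbitrary vertex, use the distance partition and the two structural facts of Lemma \ref{cactus} to guarantee that each vertex has at most two already-positioned neighbours (two in $V_{i-1}$, or one each in $V_{i-1}$ and $V_{i+1}$) when its remaining neighbours are placed, and build the ordering layer by layer outward from the root. The paper's own argument handles the cycle-closure step with no more detail than you give, simply asserting that the remaining neighbours can be positioned to satisfy the alternating condition, so your plan matches it in both strategy and level of rigour.
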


\begin{proof}We choose a vertex $v$ of $G$ and partition the vertex set of $G$ into non-empty subsets $V_0=\{v\}, V_1,\dots ,V_k$ as in the statement of Lemma \ref{cactus}. We note that $(P_1,P_2)$ is a bipartition of the vertex set of $G$, where $P_1=\cup_{i \text{ even}}V_i$ and $P_2=\cup_{i \text{ odd}}V_i$. 
We let $L$ and $M$ be parallel lines embedded on the plane and position the vertex $v$ on the line $L$. We configure $G$ by positioning the vertices of $P_1$ and $P_2$ at distinct locations on $L$ and $M$ respectively, so that the order in which the vertices are positioned along the two lines satisfies the requirements of Definition \ref{config_def}. For $1\le i\le k$, the $i$th step in the process involves the positioning of the vertices of $V_i$ on $L$ or $M$, according to whether $i$ is even or odd. 

Suppose that $r$ steps have been completed and that every vertex of $\cup_{i=0}^{r-1}V_i$ has the property that its neighbours are positioned in a manner consistent with an ASBG-colouring. Let $x_1,\dots ,x_t$ be the vertices of $V_r$. We position the vertices of $V_{r+1}$ (on $L$ or $M$) in $t$ steps, the first of which is to position the neighbours of $x_1$. At most two neighbours of $x_1$ (in $V_{r-1}$) are already positioned; all further neighbours of $x_1$ belong to $V_{r+1}$ and may be positioned in a manner that satisfies the alternating condition on colours of edges incident with $x_1$. At step $j$ we position the neighbours of $x_j$ in $V_{r+1}$. From Lemma \ref{cactus} it follows that at most two neighbours of $x_j$ in $G$ have already been assigned positions at this stage, potentially two from $V_{r-1}$ or one each from $V_{r-1}$ and $V_{r+1}$. In any case we may position the remaining neighbours of $x_j$ to satisfy the alternating condition on coloured edges. 

This iterative process results in a configuration of $G$. 
\end{proof}

\begin{example} \label{ced_config}
The diagram below demonstrates how the construction of Theorem \ref{ced_difference-1_is_asbg} might apply to a particular cactus graph with a difference-1 colouring, for a choice of initial vertex $v$.

\includegraphics[width = \textwidth]{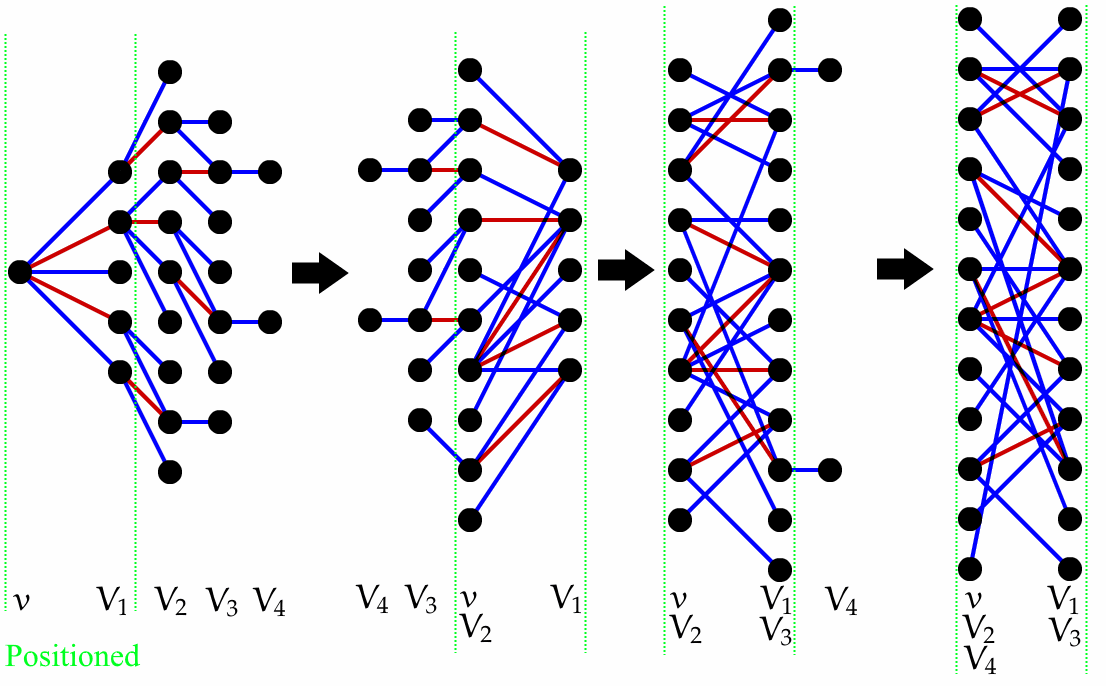}

\end{example}


\section{Deciding the Existence of a Difference-1 Colouring}

In this section, we develop methods of determining whether a given bipartite graph admits a difference-1 colouring. We consider this question first for trees and unicyclic graphs, where the analysis is considerably easier.

\subsection{Trees}

We now consider necessary and sufficient conditions for a tree to have a difference-1 colouring. As we have seen in Section \ref{cactus_section}, this will resolve the question of ASBG-colourability for trees, as every difference-1 colouring of a tree is configurable. We first note that the only connected ASBG that contains no red edges is $P_2$, the path graph on two vertices (with its  edge coloured blue).

\begin{theorem}\label{asbt-colouring} Let ${T}$ be a tree. Then ${T}$ has a difference-1 colouring if and only if its reduced form is $P_2$. \end{theorem}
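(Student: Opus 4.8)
The plan is to reduce the statement to a single structural claim about where leaf-twig configurations must appear, and then invoke the reduction machinery already developed. The converse direction is immediate: if the reduced form of $T$ is $P_2$, then $P_2$ carries a difference-1 colouring with its edge blue, and since $T$ is built from its reduced form by successive additions of leaf-twig configurations, repeated application of Lemma \ref{leaf-twig_removal_lemma} shows that $T$ itself has a difference-1 colouring. For the forward direction, the key claim I would isolate is: \emph{if a tree $T$ has a difference-1 colouring and $T \neq P_2$, then $T$ contains a leaf-twig configuration.} Granting this, the theorem follows by iteration, since removing a leaf-twig configuration from a tree leaves a tree which, by Lemma \ref{leaf-twig_removal_lemma}, still has a difference-1 colouring; the deletion process therefore cannot terminate until $P_2$ is reached, $P_2$ being the only reduced tree admitting a difference-1 colouring by the claim.

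To prove the claim I would work with a longest path $v_0 v_1 \cdots v_\ell$ of $T$, whose endpoints are necessarily leaves, and analyse the forced colouring near $v_0$. First I examine $v_1$: maximality of the path forces every neighbour of $v_1$ except $v_2$ to be a leaf, so all of those edges are blue. Combining the odd-degree requirement (a consequence of \eqref{degB-degR}) with $deg^B(v_1) - deg^R(v_1) = 1$ then pins down $\deg(v_1) = 3$ exactly, since any larger odd degree would require more blue leaf-edges than the difference-1 balance allows. Hence $v_1$ is a twig base, its two leaf-edges are blue, and the edge $v_1 v_2$ is forced red. In particular $v_2$ cannot be a leaf, so $\ell \geq 3$ and $v_3$ exists.

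The heart of the argument is the local analysis at $v_2$. Using the longest-path bound again, every neighbour $w$ of $v_2$ other than $v_3$ roots a subtree of depth at most $1$, and the same degree-versus-balance count as above forces each such $w$ to be either a leaf or a twig base joined to $v_2$ by a red edge. Two cases then arise. If some neighbour $w \neq v_3$ of $v_2$ is a leaf, then $w$ together with the twig based at $v_1$ furnishes a leaf-twig configuration at $v_2$: the four vertices $w$, $v_1$, and the two twig leaves are distinct from $v_2$ and from one another, so the claim holds. Otherwise every neighbour of $v_2$ except $v_3$ is a twig base, each contributing a red edge at $v_2$; counting these against $deg^R(v_2) = (\deg(v_2)-1)/2$ yields strictly more red edges than the difference-1 balance permits, a contradiction. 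Since $T$ is assumed to carry a difference-1 colouring, the contradictory case is excluded and the leaf-twig configuration must exist.

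I expect the main obstacle to be the bookkeeping at $v_2$: one must verify that longest-path maximality genuinely restricts the subtrees hanging at the side-neighbours of $v_2$ to depth at most $1$, and then push the difference-1 balance to force each such neighbour into the rigid leaf-or-twig-base dichotomy. Care is also needed to confirm that the four vertices produced in the leaf case are genuinely distinct and distinct from $v_2$, so that they satisfy the definition of a leaf-twig configuration, and to dispatch the short-path degenerations ($\ell \leq 2$), which are ruled out by the forced red colour of $v_1 v_2$ clashing with the blue colour any leaf-edge at $v_2$ would require.
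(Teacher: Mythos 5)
Your proposal is correct and takes essentially the same approach as the paper: both arguments locate a leaf-twig configuration at the far end of the tree (your longest-path endpoints $v_1, v_2$ play exactly the roles of the paper's furthest leaf's neighbour $b$ and its parent $v$), then iterate removals via Lemma \ref{leaf-twig_removal_lemma} until $P_2$ remains. The only difference is bookkeeping at the depth-two vertex: the paper colour-forces a blue edge there and argues its far endpoint must be a leaf, while you classify every far neighbour as a leaf or a twig base and rule out the all-twig case by counting red edges --- both are sound.
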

\begin{proof}
Suppose the reduced form of ${T}$ is $P_2$. Then $T$ can be constructed by repeatedly adding leaf-twig configurations to $P_2$. Hence, by Lemma \ref{leaf-twig_removal_lemma}, $T$ has a difference-1 colouring.

\*

Now suppose that $T$ has a difference-1 colouring $c$ and is not $P_2$. Choose any vertex ${r}$ of ${T}$. Let ${u}$ be a leaf of ${T}$ which is furthest from ${r}$, and let $b$ be the neighbour of $u$ on the unique path from $u$ to $r$. As $T$ has a difference-1 colouring, every vertex of $T$ has odd degree. Therefore $u$ shares its neigbour $b$ with at least one other vertex $u_1$ that is further from $r$ than $b$. As $u$ is a furthest leaf from $r$, this means that $u_1$ is also a leaf. As $u$ and $u_1$ are both leaves, this means the edges $ub$ and $u_1b$ must be blue in $T^c$. This means that a third edge incident with $b$ and another vertex $v$ must be red. If $v$ is not on the unique path from $b$ to $r$, then $v$ has a neighbour which is a leaf that is further from $r$ than $u$, which is contrary to the choice of $u$. Therefore $v$ is on the unique path from $b$ to $r$. As $c$ is a difference-1 colouring and $bv$ is red in $T^c$, $v$ must be incident with at least two blue edges. Let $l$ be a neighbour of $v$ which is not on the unique path from $v$ to $r$, such that $lv$ is blue. As $d(r,u) = d(r,l) + 1$, this means that $l$ must be a leaf, otherwise it would be incident with at least one red edge leading to a leaf further from $r$ than $u$. This means that ${u}$, ${u_1}$, ${b}$, and ${l}$ make a leaf-twig configuration, with ${b}$ being the base of the twig, and ${l}$ the leaf. Removing this results in another tree with a difference-1 colouring (Lemma \ref{leaf-twig_removal_lemma}). Inductively, this means that leaf-twig configurations can be removed until there are no more red edges. As $P_2$ is the only connected ASBG with no red edges, this process will reduce ${T}$ to $P_2$.
\end{proof}

\begin{corollary} A tree has an ASBG-colouring if and only if its reduced form is $P_2$, and this colouring is unique. \end{corollary}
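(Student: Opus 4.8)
The plan is to combine the two previous theorems and then add a short forcing argument for uniqueness. For the biconditional, I would first observe that a tree is a cactus graph (it contains no cycles, so the cactus condition holds vacuously), so Theorem \ref{ced_difference-1_is_asbg} applies: every difference-1 colouring of a tree is configurable, hence is an ASBG-colouring. Since, conversely, any ASBG-colouring is by definition a difference-1 colouring, a tree is ASBG-colourable if and only if it has a difference-1 colouring. By Theorem \ref{asbt-colouring} the latter holds if and only if the reduced form of the tree is $P_2$, which gives the stated equivalence.

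It remains to prove uniqueness. Because the ASBG-colourings of a tree are exactly its difference-1 colourings, it is enough to show that a tree admits at most one difference-1 colouring. I would do this by a direct forcing argument: root the tree $T$ at an arbitrary vertex $\rho$ and process the vertices in order of decreasing depth. Each non-root vertex $w$ is incident with exactly one edge joining it to its parent, together with edges joining it to its children; when $w$ is processed, all of its child-edges have already been coloured (they are incident with deeper vertices), so the parent-edge is the unique uncoloured edge at $w$. The requirement $deg^B(w) - deg^R(w) = 1$ then admits at most one valid colour for that edge, since the two possibilities give values of $deg^B(w)-deg^R(w)$ differing by $2$. As every edge of $T$ is the parent-edge of its deeper endpoint, this shows that the colour of every edge is forced, and hence that the difference-1 colouring (which exists, by the equivalence) is unique.

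Alternatively, uniqueness could be obtained inductively from Lemma \ref{leaf-twig_removal_lemma}: the colouring of $P_2$ is unique, the extension of a difference-1 colouring across an added leaf-twig configuration is forced (the leaf-edges blue and the base's third edge red), and any tree with reduced form $P_2$ is built from $P_2$ by such additions; this matches the machinery already used in Theorem \ref{asbt-colouring}.

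I do not expect a genuine obstacle here, since the equivalence is immediate from the cited results and the uniqueness is essentially a one-line forcing argument. The only points requiring care are to note explicitly that configurability is automatic for trees (so that ASBG-colourings and difference-1 colourings coincide), and to interpret ``this colouring is unique'' as a statement about the edge $2$-colouring rather than about the vertex ordering that witnesses configurability.
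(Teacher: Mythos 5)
Your proposal is correct and follows the same route the paper intends: the equivalence is obtained by combining Theorem \ref{asbt-colouring} with the configurability of difference-1 colourings of cactus graphs (Theorem \ref{ced_difference-1_is_asbg}, which applies to trees vacuously), and uniqueness comes from the fact that the colouring is forced, which is exactly the paper's remark that the leaf-twig removal process of Theorem \ref{asbt-colouring} determines the colouring. Your rooted bottom-up forcing argument and your leaf-twig induction are both fine formalizations of that forcing, so there is nothing to correct.
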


Note that the process outlined in the proof of Theorem \ref{asbt-colouring} not only gives an existence condition for an ASBG-colouring of a tree, it gives the colouring. The following observation will be needed later, for the proof of Lemma \ref{local-tree-lemma}.

\begin{corollary} \label{asbt-corollary} Let ${T}$ be an ASBG-colourable tree and let ${r}$ be a vertex of ${T}$. Then there is a sequence of leaf-twig removals that reduces ${T}$ to a subgraph that consists of only leaves and twigs attached to $r$, with one more leaf than twig. \end{corollary}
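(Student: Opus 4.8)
The plan is to re-run the leaf-twig removal process from the proof of Theorem \ref{asbt-colouring}, but to forbid the removal of any leaf-twig configuration centred at $r$ itself. Since $T$ is ASBG-colourable it carries a difference-1 colouring $c$, and since $T$ is a tree this colouring is forced as in Theorem \ref{asbt-colouring}; in particular every edge at a leaf is blue and the third edge at a twig base is red. I would repeatedly delete leaf-twig configurations whose central vertex is distinct from $r$, continuing as long as such a configuration exists. Each deletion removes four vertices while retaining their common neighbour, so the process keeps the graph a tree containing $r$ and terminates after finitely many steps. Call the resulting subgraph $T'$; by Lemma \ref{leaf-twig_removal_lemma} it still carries a difference-1 colouring (the restriction of $c$), and by construction every leaf-twig configuration remaining in $T'$ is centred at $r$.

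The heart of the argument is to show that this stopping graph $T'$ consists of nothing but leaves and twigs attached to $r$. First I would bound the depth of $T'$ as a tree rooted at $r$. If $T'$ had a vertex at distance at least $3$ from $r$, then taking $u$ to be a leaf of $T'$ furthest from $r$ and running exactly the furthest-leaf argument of Theorem \ref{asbt-colouring} produces a leaf-twig configuration centred at a vertex $v$ with $d(r,v) = d(r,u) - 2 \geq 1$, hence $v \neq r$. This contradicts the fact that $T'$ has no removable configuration away from $r$, so every vertex of $T'$ lies within distance $2$ of $r$.

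With the depth pinned to at most $2$, the structure is forced by \eqref{degB-degR} applied locally. Every neighbour $w$ of $r$ has all of its edges to depth-$2$ vertices coloured blue (these vertices are leaves, being childless in a tree of depth $2$), so if $m$ denotes the number of such children then the difference-1 condition at $w$ reads $m + 1 = 1$ when the edge $wr$ is blue and $m - 1 = 1$ when $wr$ is red. Thus either $m = 0$ and $w$ is a leaf of $T'$ joined to $r$ by a blue edge, or $m = 2$ and $w$ is the base of a twig joined to $r$ by a red edge, with no other possibility occurring. Hence $T'$ is precisely a collection of leaves and twigs attached to $r$, the degenerate case $T' = P_2$ being a single leaf and no twig. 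Finally, applying \eqref{degB-degR} at $r$ itself, the blue edges at $r$ are exactly those to its leaf-neighbours and the red edges are exactly those to its twig-bases, so if $\ell$ and $t$ count the leaves and twigs respectively then $\ell - t = 1$, giving one more leaf than twig as required.

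The step I expect to be the main obstacle is the depth bound in the second paragraph: it requires recognising that the furthest-leaf mechanism of Theorem \ref{asbt-colouring} not only locates a leaf-twig configuration but also controls its distance from $r$, and then using the absence of removable configurations away from $r$ to collapse the tree to depth $2$. Once that is in place, the remaining local degree-and-colour analysis at the neighbours of $r$ and at $r$ itself is routine.
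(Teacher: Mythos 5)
Your overall plan follows the paper's (run the leaf-twig removal process of Theorem \ref{asbt-colouring} while protecting $r$, then identify the stopping tree $T'$), but there is a genuine gap in how you protect $r$. You forbid only the removal of configurations \emph{centred} at $r$, and you justify the claim ``the process keeps the graph a tree containing $r$'' by noting that each deletion retains the central vertex. That is not sufficient: $r$ can be one of the four \emph{deleted} vertices of a configuration centred at some other vertex, which happens whenever $r$ is itself a leaf or part of a twig. Concretely, let $T$ have a centre $v$ adjacent to two leaves $r$ and $w$ and to the base $b$ of a twig with leaves $b_1,b_2$ (colour $vb$ red, all other edges blue; this is an ASBG-colourable tree). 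Then $\{r,b,b_1,b_2\}$ is a leaf-twig configuration at $v\neq r$, so your process is permitted to delete it, removing $r$ and leaving the $P_2$ on $\{v,w\}$; your depth bound and local analysis then have no vertex $r$ to speak about. Since your process is nondeterministic and your proof asserts that every run keeps $r$, this is a real failure, not a degenerate technicality. The fix is small: forbid removing any configuration whose four vertices \emph{include} $r$ (your depth-bound contradiction survives, because the configuration produced by the furthest-leaf argument when $d(r,u)\geq 3$ has all four of its vertices at distance at least $d(r,u)-1\geq 2$ from $r$), or do what the paper does and always remove a \emph{furthest} configuration, whose vertices automatically avoid $r$ whenever it is not attached to $r$.

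Apart from this, your finishing argument is correct and takes a genuinely different route from the paper's. The paper identifies $T'$ by invoking Theorem \ref{asbt-colouring} a second time: $T$ reduces to a $P_2$ containing $r$, so $T'$ ``must be'' $P_2$ with leaf-twig configurations added at $r$. You instead prove directly that $T'$ has depth at most $2$ and then read off its structure from \eqref{degB-degR}: at each neighbour $w$ of $r$ the count gives $m+1=1$ or $m-1=1$, forcing $w$ to be a leaf or a twig base, and the count at $r$ gives $\ell-t=1$. This is more self-contained and makes explicit the structural step that the paper leaves largely implicit; once the removal process is repaired as above, it yields a complete proof.
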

\begin{proof}
In the proof of Theorem \ref{asbt-colouring}, it was shown that a furthest vertex from ${r}$ is always part of a removable leaf-twig configuration. So we succesively remove the furthest leaf-twig configuration from $r$ until any remaining leaf-twig configurations are attached to $r$. We call the resulting tree $T'$. Theorem \ref{asbt-colouring} tells us that it is possible to remove leaf-twig configurations from $T$ until only $P_2$ remains, where $r$ is one of the vertices of this $P_2$. Therefore $T'$ must be $P_2$ with extra leaf-twig configurations at $r$, which means that $T'$ consists only of leaves and twigs attached to $r$, with one more leaf than twig.
\end{proof}


\subsection{Unicyclic Graphs}

\begin{definition} A graph is \emph{unicyclic} if it is connected and contains exactly one cycle. \end{definition}

When analysing the ASBG-colourability of graphs which are not trees, it is useful to define the \emph{skeleton} of a graph.

\begin{definition} For a graph ${G}$ (containing at least one cycle), we refer to the subgraph that results from repeatedly removing leaves and their incident edges, until none remain, as \emph{the skeleton} ${Sk(G)}$ of $G$. \end{definition}

\n\textsc{Note:} ${Sk(G)}$ is determined as a subgraph of ${G}$. The process of repeatedly deleting leaves from a tree terminates with a single vertex, but which vertex remains depends on the order in which leaves are deleted. For this reason, we do not define the skeleton of a tree.

\*

We also note that if ${G'}$ is the reduced form of ${G}$, then ${Sk(G') = Sk(G)}$.

\begin{definition} A \emph{junction} in a graph ${G}$ is a vertex $v$ with ${deg_{Sk(G)}(v) \geq 3}$. \end{definition}

\n\textsc{Note:} If ${G}$ is unicyclic, then the skeleton ${Sk(G)}$ is the graph consisting of the cycle in ${G}$, and ${G}$ has no junctions.

\begin{definition} For any graph $G$ that contains a cycle, let ${v}$ be a vertex in ${Sk(G)}$. We define the \emph{local tree at v}, ${T_v}$, to be the connected component containing ${v}$ that remains when all edges of ${Sk(G)}$ are deleted from ${G}$. \end{definition}

\begin{example}
${\;}$

\includegraphics[width = \linewidth]{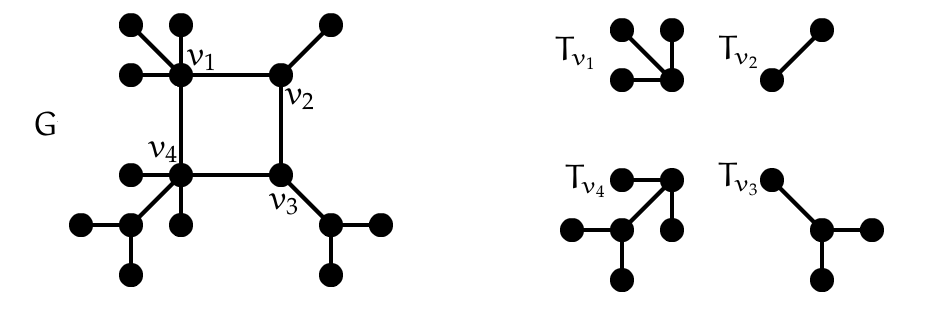}
\end{example}

\begin{lemma}\label{local-tree-lemma} Let ${G}$ be a graph with a difference-1 colouring ${c}$ and reduced form $H$. Then for each vertex ${v \in Sk(H)}$, the local tree ${T_v}$ consists either only of ${v}$, or of $v$ with only leaves or only twigs incident with it. \end{lemma}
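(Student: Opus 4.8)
The plan is to work entirely inside the reduced graph $H$ (the colouring $c$ restricts to a difference-1 colouring of $H$, and the statement concerns $v\in Sk(H)$), and to exploit two features of the local tree $T_v$: that $T_v$ is genuinely a tree meeting the rest of $H$ only at $v$, and that every vertex of $T_v$ other than $v$ already satisfies $deg^B-deg^R=1$ within $T_v$. Granting these, the structure of $T_v$ is forced by a descent on distance from $v$, exactly as in the proof of Theorem \ref{asbt-colouring} and Corollary \ref{asbt-corollary}.

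First I would establish the structural preliminary that $T_v$ contains no skeleton vertex other than $v$. If $w\ne v$ were a second skeleton vertex in the component of $v$ after deleting the skeleton edges, there would be a path from $v$ to $w$ using only non-skeleton edges; choosing $w$ nearest to $v$, all interior vertices of this path are non-skeleton (leaf-stripped) vertices. But at the first moment of the leaf-stripping process at which such an interior vertex is deleted, the whole path is still intact, so that vertex has degree at least $2$ and cannot be a leaf, a contradiction. Hence every vertex of $T_v$ other than $v$ is a non-skeleton vertex, so all of its $H$-edges lie in $T_v$; in particular each such vertex has odd degree and satisfies $deg^B-deg^R=1$ inside $T_v$. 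I then regard $T_v$ as a tree rooted at $v$.

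The heart of the argument is to bound the depth of $T_v$ by $2$. Suppose some vertex lay at distance $d\ge 3$ from $v$, and let $u$ be a vertex at maximum distance $d$. Its parent $b$ lies at distance $d-1\ge 2$, so $b\ne v$; all children of $b$ are at distance $d$ and hence are leaves with blue edges, and the difference-1 condition at $b$ forces $b$ to be a twig-base whose edge to its parent $p$ is red (otherwise all edges at $b$ would be blue, giving $deg^B-deg^R=deg(b)\ge 3$). Now $p$ lies at distance $d-2\ge 1$, so $p\ne v$ and $p$ satisfies difference-1; meeting the red edge $pb$, it must meet at least two blue edges, and as $p$ has only one parent it has a blue edge to some child $l\ne b$. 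Since $l$ lies at distance $d-1$ its subtree has depth at most $1$, and if $l$ were not a leaf then its blue parent edge $pl$ together with the blue edges to its leaf-children would make every edge at $l$ blue, violating difference-1; hence $l$ is a leaf. Then $p$ carries a blue leaf $l$ and a twig with base $b$, i.e. a leaf-twig configuration, contradicting that $H$ is reduced. Thus $T_v$ has depth at most $2$.

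It remains to read off the three shapes. If the depth is $0$ then $T_v=\{v\}$; if it is $1$, every neighbour of $v$ in $T_v$ is a leaf, giving ``only leaves''. If it is $2$, each child of $v$ has only leaf-children, so the same difference-1 computation shows it is either a leaf or a twig-base; since the depth is $2$ at least one twig-base occurs, and if $v$ also had a leaf-child then $v$ would support a leaf-twig configuration, again contradicting reducedness, so every child of $v$ is a twig-base, giving ``only twigs''. The main obstacle throughout is managing the interplay between the colour constraints imposed by difference-1 (which pin down twig-bases and their red attaching edges) and the prohibition on leaf-twig configurations coming from $H$ being reduced; the structural preliminary about $T_v$ is precisely what licenses applying the difference-1 condition freely to every vertex except the root.
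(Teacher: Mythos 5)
Your proof is correct, but it takes a genuinely different route from the paper's. The paper reduces to the tree theory it has already built: it forms an auxiliary tree $T_v'$ by attaching $|d-1|$ extra twigs or leaves at $v$ so that the local tree acquires a full difference-1 colouring, invokes Corollary \ref{asbt-corollary} to reduce $T_v'$ by leaf-twig removals to leaves and twigs attached to $v$, and then deletes the auxiliary vertices to recover the claimed form of $T_v$. You instead argue directly inside the reduced graph $H$: after your structural preliminary (which the paper asserts without proof, namely that every vertex of $T_v$ other than $v$ retains all of its $H$-edges in $T_v$ and hence its difference-1 property), you bound the depth of the rooted tree $T_v$ by $2$ via an extremal argument --- a vertex at maximal distance $d \ge 3$ forces its parent $b$ to be a twig base with red edge to $p$, then forces a leaf child $l$ of $p$, hence a leaf-twig configuration in $H$, contradicting reducedness --- and then read off the trichotomy, using reducedness at $v$ once more to exclude mixing leaves and twigs. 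The paper's route buys economy, reusing Theorem \ref{asbt-colouring} and Corollary \ref{asbt-corollary} wholesale, at the cost of the rather delicate bookkeeping of adding and later removing auxiliary configurations (the murkiest part of its proof); your route buys self-containedness, the only external input being Lemma \ref{leaf-twig_removal_lemma} to restrict $c$ to $H$, and in effect it inlines, specialised to $T_v$, the same furthest-vertex idea that drives the paper's proof of Theorem \ref{asbt-colouring}. Two small points you leave implicit and should state explicitly: the degenerate case of your preliminary in which the non-skeleton path from $v$ to $w$ has no interior vertex (an edge is deleted during leaf-stripping only together with a leaf endpoint, so an edge joining two skeleton vertices must be a skeleton edge), and the acyclicity of $T_v$ (no cycle edge is ever deleted during leaf-stripping, so every cycle of $H$ lies in $Sk(H)$), which is what licenses your parent/child and depth language.
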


\begin{proof} The number of blue edges at ${v}$ in ${H^c}$ is one greater than the number of red edges at ${v}$ in ${H^c}$. This also is the case for every vertex in ${T_v^c}$ except possibly for ${v}$ itself.

Let ${d = deg_{T_v}^{\hspace{2mm}B}(v) - deg_{T_v}^{\hspace{2mm}R}(v)}$. If ${d > 1}$, we attach ${d-1}$ twigs to ${v}$. If ${d < 1}$, we attach ${1-d}$ leaves to ${v}$. We call the resulting coloured tree ${T_v'}$.

${T_v'}$ is an ASBG, which means that leaf-twig configurations can be removed from $T_v'$ until only leaf-twig configurations attached to $v$ remain (Corollary \ref{asbt-corollary}). If there are any leaf-twig configurations attached to $v$ that are also in $T_v$, we remove them. We have now removed all leaf-twig configurations in $T_v'$ that are also in $G$, and the remaining graph consists of a copy of $P_2$ containing $v$ with $|d-1|$ leaf-twig configurations attached to $v$. We can now delete the $|d-1|$ twigs or leaves that we attached to $v$ to make $T_v'$ that were not part of $T_v$. We are now left with the reduced form of $T_v$; a subgraph of ${T_v}$ that consists either only of $v$, or of $v$ with only leaves or only twigs incident with it.
\end{proof}

\begin{example}
${\;}$

\includegraphics[width = \linewidth]{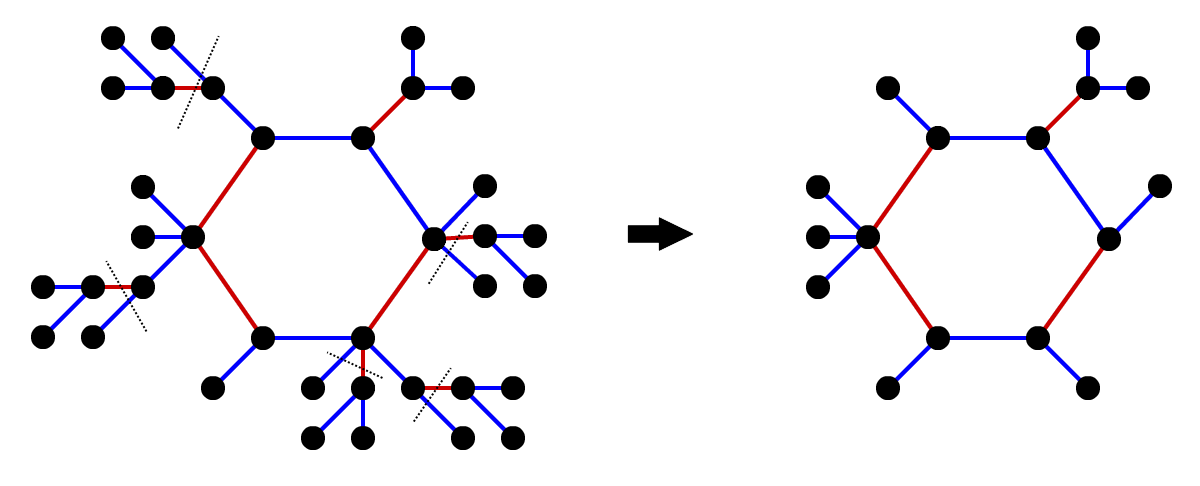}
\end{example}

\begin{corollary}\label{ltt-corollary}
Let ${G}$ be a graph and ${H}$ be the reduced form of ${G}$. Then if $G$ has a difference-1 colouring $c$, any vertex of degree ${2}$ in ${Sk(G)}$ is of one of the following types:
\begin{itemize}
\item \textbf{Leaf-Type:} A vertex of degree $3$ in $H$, with a leaf incident with it. If a blue and red edge meet at a vertex of degree $2$ in $Sk(G^c)$, then $v$ must be a leaf-type vertex in $G$.
\item \textbf{Twig-Type:} A vertex of degree ${3}$ in ${H}$, with the base of a twig incident with it. If two blue edges meet at a vertex of degree $2$ in $Sk(G^c)$, then $v$ must be a twig-type vertex in $G$.
\item \textbf{Triple-Type:} A vertex of degree ${5}$ in ${H}$, with three leaves incident with it. If two red edges meet at a vertex of degree $2$ in $Sk(G^c)$, then $v$ must be a triple-type vertex in $G$.
\end{itemize}
\end{corollary}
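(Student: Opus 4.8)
The plan is to combine the structural description of the local tree from Lemma \ref{local-tree-lemma} with the difference-1 relation \eqref{degB-degR} evaluated at $v$, treating the colours of the two skeleton edges incident with $v$ as the only free data. Since the reduced form $H$ satisfies $Sk(H)=Sk(G)$, a vertex of degree $2$ in $Sk(G)$ is also a vertex of degree $2$ in $Sk(H)$, so $v\in Sk(H)$ and Lemma \ref{local-tree-lemma} applies: the local tree $T_v$ computed in $H$ is either $\{v\}$ alone, or $v$ with only leaves incident, or $v$ with only twigs incident.

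First I would record the forced data. Restricting $c$ to $H$ gives a difference-1 colouring of $H$, so every vertex of $H$ has odd degree. This immediately rules out $T_v=\{v\}$, since then $v$ would have degree exactly $2$ in $H$ (its two skeleton edges and nothing else), violating the parity requirement. Next, as already observed for leaves and twigs, each leaf edge at $v$ is forced blue and each twig-base edge incident with $v$ is forced red, so the local-tree edges at $v$ are monochromatic of a known colour in each of the two surviving cases.

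The main step is then a short case analysis on the colours of the two skeleton edges at $v$. Write $s_B,s_R$ for the number of blue and red skeleton edges at $v$, so $s_B+s_R=2$ and $s_B-s_R\in\{2,0,-2\}$. If $T_v$ carries $\ell$ leaves then \eqref{degB-degR} reads $\ell+(s_B-s_R)=1$, while if it carries $t$ twigs it reads $(s_B-s_R)-t=1$. Running through the three possible skeleton colourings and discarding every solution that forces a negative count, I expect exactly three admissible configurations to survive: two blue skeleton edges force one twig and degree $3$ (twig-type); one blue and one red force one leaf and degree $3$ (leaf-type); two red force three leaves and degree $5$ (triple-type). This simultaneously establishes the classification and the three colour-to-type implications stated in the corollary.

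I do not anticipate a serious obstacle here; the only points requiring care are checking that $Sk(G)=Sk(G^c)=Sk(H)$ legitimises the use of Lemma \ref{local-tree-lemma} at $v$, and verifying that the parity and sign constraints genuinely eliminate every configuration other than the three listed, for instance that five or more leaves, or two or more twigs, cannot occur because $s_B-s_R$ can only range over $\{2,0,-2\}$.
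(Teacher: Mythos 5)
Your proposal is correct and is essentially the argument the paper intends: the paper states this corollary without proof, leaving it as an immediate consequence of Lemma \ref{local-tree-lemma}, and your derivation (using $Sk(G)=Sk(H)$ to apply that lemma at $v$, noting the forced colours of leaf and twig edges, and running the three-way case analysis on the skeleton-edge colours against the difference-1 relation) is exactly the implicit reasoning. Your parity observation ruling out $T_v=\{v\}$ is a worthwhile detail that the paper glosses over.
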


\n\textsc{Note:} We only use the terms leaf-type, twig-type, and triple-type to classify vertices of degree $2$ in $Sk(G)$. Vertices of higher degree in $Sk(G)$ will be discussed later. We consider twig-type and triple-type vertices to be of \emph{opposite type} to one another.

\begin{definition} Let $G$ be a graph whose skeleton contains at least one vertex that is not leaf-type. A \emph{limb} of ${G}$ is a subgraph ${H}$ of ${Sk(G)}$ such that the edges of ${H}$ form a trail whose only non-leaf-type vertices are its (not necessarily distinct) endpoints. \end{definition}

We observe that every edge of ${Sk(G)}$ belongs to exactly one limb, and that the skeleton of a graph is the edge-disjoint union of its limbs.

\begin{lemma} \label{odd-even-distance-lemma} Let ${G}$ be a graph with a difference-1 colouring and let  ${P = v_1 \dots v_k}$ be a limb in ${Sk(G)}$. Then ${P}$ has odd length if ${v_1}$ and ${v_k}$ are both of twig-type or triple-type, and even length if $v_1$ and $v_k$ are of opposite type. \end{lemma}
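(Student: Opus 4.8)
The plan is to read off the colour of each edge of the limb from the types of the vertices bounding it, using Corollary \ref{ltt-corollary}, and then to compare the parity of the number of edges of $P$ with the colours forced at its two endpoints. I would label the edges of the limb $e_1 = v_1v_2,\ e_2 = v_2v_3,\ \dots,\ e_{k-1} = v_{k-1}v_k$, so that the length of $P$ is $k-1$; the entire task is then to determine the parity of $k-1$.

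First I would treat the interior vertices $v_2,\dots,v_{k-1}$. By the definition of a limb, each of these is leaf-type, hence of degree $2$ in $Sk(G)$; in particular each is met by exactly the two consecutive limb edges and therefore appears only once along the trail. By Corollary \ref{ltt-corollary}, at a leaf-type vertex a blue and a red edge meet, so $c(e_{i-1}) \neq c(e_i)$ for every $2 \le i \le k-1$. Consequently the colours $c(e_1), c(e_2), \dots, c(e_{k-1})$ alternate, and $c(e_{k-1}) = c(e_1)$ precisely when $k-1$ is odd, while $c(e_{k-1}) \neq c(e_1)$ precisely when $k-1$ is even.

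Next I would pin down $c(e_1)$ and $c(e_{k-1})$ from the endpoint types. The endpoint $v_1$ has degree $2$ in $Sk(G)$ (being twig- or triple-type), and $e_1$ is one of its two incident skeleton edges. By Corollary \ref{ltt-corollary}, both of these edges are blue when $v_1$ is twig-type and both are red when $v_1$ is triple-type, so $c(e_1)$ is blue or red accordingly; the same argument applied to $v_k$ fixes $c(e_{k-1})$. Combining this with the alternation established above: if $v_1$ and $v_k$ are both twig-type or both triple-type then $c(e_1) = c(e_{k-1})$, forcing $k-1$ to be odd; if they are of opposite type then $c(e_1) \neq c(e_{k-1})$, forcing $k-1$ to be even. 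This is exactly the claimed dichotomy.

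I expect the only delicate point to be the bookkeeping around the trail rather than any genuine obstacle. One must confirm that every interior vertex really is leaf-type and is traversed only once, which holds because leaf-type vertices have degree $2$, and that the limb edge incident with an endpoint inherits the endpoint's common edge-colour even though that endpoint carries a second skeleton edge lying outside $P$. The degenerate case $v_1 = v_k$ (a closed limb) is covered by the same computation and is automatically consistent with the ``both twig-type / both triple-type'' branch, since a vertex cannot be simultaneously of twig-type and triple-type.
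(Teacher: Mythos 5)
Your proof is correct and follows essentially the same approach as the paper's: both arguments use the fact that edge colours alternate across the leaf-type interior vertices of the limb while the two skeleton edges at a twig-type (resp.\ triple-type) endpoint share a colour, and then compare parities. The only cosmetic difference is direction — the paper argues from the parity of the length to the agreement of endpoint types, while you argue from type agreement to parity — but since both properties are dichotomies these are trivially equivalent.
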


\begin{proof}
Let $v_1$ and $v_k$ be vertices of the same or opposite type to one another. This means that each is a twig-type or triple-type vertex. As $v_1$ is either a twig-type or triple-type vertex, both edges incident with $v_1$ are the same colour. The same is true for $v_k$. As ${v_2, \dots, v_{k-1}}$ are all leaf-type vertices, they are each incident with an edge of each colour, which means that edge colours alternate along $P$. Therefore, if $k-1$ is odd, the colours of the edges incident with $v_1$ are the same as those incident with $v_k$, meaning that $v_1$ and $v_k$ are of the same vertex type. If $k-1$ is even, then the colours of the edges incident with $v_1$ are different to those incident with $v_k$, meaning that $v_1$ and $v_k$ are of opposite vertex type.
\end{proof}

\begin{proposition}\label{unicyclic_prop} Let ${G}$ be a bipartite unicyclic graph. Then ${G}$ is ASBG-colourable if and only if ${G}$ satisfies the following:
\begin{itemize}
\item Each vertex in Sk(G) is either a leaf-type, twig-type, or triple-type vertex;
\item Each limb has odd (even) length if its endpoints are the same (opposite) type.
\end{itemize}
\end{proposition}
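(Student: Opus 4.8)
The plan is to reduce the statement to a question purely about difference-1 colourings. Since $G$ is unicyclic it is in particular a cactus graph, so Theorem \ref{ced_difference-1_is_asbg} tells us that $G$ is ASBG-colourable if and only if it admits a difference-1 colouring; moreover $Sk(G)$ is exactly the unique cycle of $G$, so every vertex of $Sk(G)$ has degree $2$ there. I would then treat the two implications separately, passing freely between $G$ and its reduced form $H$, which share the same skeleton and the same status regarding the existence of a difference-1 colouring.

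For necessity, I would take an ASBG-colouring of $G$, which is in particular a difference-1 colouring. Because every vertex of $Sk(G)$ has degree $2$ in $Sk(G)$, Corollary \ref{ltt-corollary} immediately forces each such vertex to be leaf-type, twig-type, or triple-type, giving the first condition. The second condition is then exactly the content of Lemma \ref{odd-even-distance-lemma} applied to each limb, recalling that limbs have non-leaf-type endpoints. This direction should amount to little more than a citation of these two earlier results.

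For sufficiency the real work is an explicit construction. Assuming both conditions, the type information pins down $H$ completely: it is the even cycle $Sk(G)$ carrying, at each of its vertices, a single leaf (leaf-type), a single twig (twig-type), or three leaves (triple-type). I would colour the off-cycle edges first, since they are forced — all leaf edges blue, and the base edge of each twig red — and a one-line degree count at each vertex (leaf-type: $2-1$; twig-type: $2-1$; triple-type: $3-2$) verifies $deg^B - deg^R = 1$ there. I would then colour the cycle edges limb by limb: at each non-leaf-type endpoint I assign the incident cycle edge the colour its type demands (blue at twig-type, red at triple-type) and alternate along the limb, with the degenerate case in which every vertex is leaf-type handled by simply $2$-colouring the even cycle alternately.

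The main obstacle is verifying that this per-limb prescription patches together into a globally consistent colouring of the cycle. Within a single limb, alternation determines the colour at the far endpoint from the near one, and the parity hypothesis (via the logic of Lemma \ref{odd-even-distance-lemma}) is precisely what guarantees that the two type-forced endpoint colours agree. At a vertex shared by two limbs the forced colour is dictated by that single vertex, so neighbouring limbs automatically match and no further ``closing-up'' constraint around the cycle arises. I would pay particular attention to the boundary case in which there is exactly one non-leaf-type vertex: here the whole cycle is a single closed limb whose two endpoints coincide, and the even length of the cycle makes the odd-length requirement fail, correctly predicting non-colourability. I must confirm this case is consistent with the stated conditions rather than an exception to them.
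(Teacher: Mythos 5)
Your proposal is correct and follows essentially the same route as the paper's proof: necessity by citing Corollary \ref{ltt-corollary} and Lemma \ref{odd-even-distance-lemma}, and sufficiency by colouring the forced off-cycle edges, alternating along each limb from its typed endpoints with the parity hypothesis guaranteeing consistency, then invoking Theorem \ref{ced_difference-1_is_asbg} to pass from the resulting difference-1 colouring to configurability. If anything, you are slightly more careful than the paper, which does not explicitly treat the degenerate cases you flag (a skeleton all of whose vertices are leaf-type, and a single closed limb whose endpoints coincide at the unique non-leaf-type vertex).
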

\begin{proof}
Suppose ${G}$ satisfies the above conditions. We now give $G$ a colouring $c$, as follows. At any twig-type vertex $v$ of $Sk(G)$, we colour the edges of $Sk(G)$ incident with $v$ blue, we colour the edge between $v$ and the base of the twig red, and the other two edges of the twig blue. So $v$ is incident with one more blue edge than red. At any triple-type vertex $v$ of $Sk(G)$, we colour the edges of $Sk(G)$ incident with $v$ red, and we colour the edges between $v$ and the three leaves blue. So $v$ is incident with one more blue edge than red. As all vertices of $Sk(G)$ that are of the opposite type are an even distance apart, vertices of the opposite type cannot be neighbours, and so it is possible to colour $G$ in this way. Along each limb of $Sk(G)$, the first edge is now coloured red or blue according to the type of vertex it is incident with. If the other vertex incident with this edge is of leaf-type, we colour the next edge along the limb the opposite colour to the previous edge. We continue to do this until we reach the other end of the limb. As the limb has odd (even) length if the vertices are of the same (opposite) type, the colour of the last two edges of the limb are the opposite to one another, and the vertex incident with these two edges are of leaf-type. Finally, we colour all edges between all leaf-type vertices and leaves blue. Now all leaf-type vertices are inciden with one more blue edge than red, and $c$ is a difference-1 colouring of $G$. As ${G}$ is unicyclic, we know that ${G^c}$ is configurable (Theorem \ref{ced_difference-1_is_asbg}). So $G$ is ASBG-colourable.

\*

On the other hand, suppose ${G}$ has an ASBG-colouring ${c}$. From Corollary \ref{ltt-corollary} and  Lemma \ref{odd-even-distance-lemma}, we have that each vertex in $Sk(G)$ is of leaf, twig, or triple-type, and that each limb has odd (even) length if its endpoints are the same (opposite) type.
\end{proof}


\subsection{Graphs With Junctions}

We now turn our attention to bipartite graphs possessing junctions, where the problems of determining the existence of a difference-1 colouring and configurability are both considerably more complicated.

\*

We have identified necessary and sufficient conditions for trees and unicyclic graphs to admit difference-1 colourings. In this section, we extend our analysis to the situation of graphs whose skeletons include junctions. We note that Lemma \ref{local-tree-lemma}, Corollary \ref{ltt-corollary}, and Lemma \ref{odd-even-distance-lemma} provide a partial test for difference-1 colourability, in the sense that a bipartite graph whose reduced form fails to satisfy the conditions of these results does not have a difference-1 colouring. The main content of this section is an algorithm whose purpose is to detect obstacles to a graph that passes these conditions having a difference-1 colouring, or to confirm the existence of a difference-1 colouring. We may restrict our attention to reduced graphs satisfying these conditions.

\*

We now describe the operation of the algorithm on a reduced graph $G$ satisfying the conditions of Lemma \ref{local-tree-lemma}, Corollary \ref{ltt-corollary}, and Lemma \ref{odd-even-distance-lemma}. The algorithm initializes $J$ to be the empty set and then iterates the following step. Each iteration involves at least one addition of a new element to $J$.

A junction $j$ with $j \not \in J$ is chosen, and added to the set $J$. Integer weights are assigned in turn to each edge $e$ incident with $j$ as follows.
\begin{itemize}
\item If $e$ is not in $Sk(G)$, either $e$ is incident with a leaf and is assigned the weight $1$ (indicating that $e$ must be blue in any difference-1 colouring of $G$), or $e$ is incident with the base of a twig and is assigned the weight $-1$ (indicating red).
\item If $e$ is in $Sk(G)$, and the limb to which $e$ belongs ends in a vertex of twig/triple-type, then the colour of $e$ in any difference-$1$ colouring is determined by this limb, and $e$ is assigned the weight $1$ or $-1$ accordingly.
\item The remaining possibility is that the non-leaf type vertex on the other end of the limb starting at $j$ along $e$ is a junction $j_1$. In this case, the algorithm proceeds as follows:
\begin{itemize}
\item If $j_1 \in J$, then the weight 0 is assigned to $e$.
\item If $j_1 \not \in J$ (which means, in particular, that $j_1 \not = j$), then $j_1$ is adjoined to the set $J$, and the assignment procedure is applied recursively to all edges of $G$ incident with $j_1$, with the exception of the edge $e_1$ that belongs to the same limb as $e$. An integer weight is then assigned to $e_1$ by the requirement that the sum of weights on all edges incident with $j_1$ must be $1$. Then $x_e$, the weight assigned to $e$ is assigned by $x_e = \pm x_{e_1}$, according to whether the limb containing $e$ and $e_1$ requires that these two edges have the same or opposite colours.
\end{itemize}
\end{itemize}

This iteration of the algorithm concludes when weights have been assigned to all edges incident with junctions that can be reached from $j$ via limbs of $Sk(G)$ that do not include twig or triple-type vertices. No more iterations of the algorithm are run when all junctions of $G$ are included in $J$.

If the algorithm identifies a junction for which the sum of the incident weights is not $1$, then the algorithm returns \emph{False} to indicate that $G$ does not admit a difference-$1$ colouring. Otherwise, it returns \emph{True}, and the algorithm proceeds to assign weigths to all edges of the graph that are not incident with junctions, so that all edges of $G$ have now been assigned a weight. If all assigned weights are $1$, $-1$, or $0$, then $G$ admits a difference-$1$ colouring, which we can easily determine from the output of the algorithm. Otherwise, the algorithm has assigned some \emph{surplus weights}; weights which have magnitude greater than $1$. In this case, the existence or not of a difference-$1$ colouring depends on properties of the graph consisting of those limbs of $Sk(G)$ that include edges that have been assigned surplus or zero weights. This theme is developed in Section \ref{redistrib}. We now present the details of the algorithm in a pseudocode format.

\begin{algorithm}[H]
\caption{Difference-1 Colouring Algorithm}\label{junction_weight_algorithm}
\begin{algorithmic}[1]
\State $J \gets \{ \}$
\Function{assign}{j}
\For {e in j.edges} \Comment{j.edges is the set of edges incident with j}
\If {e.next $=$ "leaf"}\Comment{if $e \not\in Sk(G)$, e.next is the type of the other vertex incident with e}
\State e.weight $\gets 1$
\ElsIf {e.next $=$ "base\_of\_twig"}
\State e.weight $\gets -1$
\ElsIf {e.next $=$ "twig-type"} \Comment{e.next is type of last vertex of limb L from j along e}
\State e.weight $\gets (-1)^{\text{e.distance} + 1}$ \Comment{e.distance returns the length of L}
\ElsIf {e.next $=$ "triple-type"}
\State e.weight $\gets (-1)^{\text{e.distance}}$
\ElsIf {e.next $=$ "junction"}
\State $j_1 \gets$ e.next\_junction \Comment{We must first calculate the weights of edges incident with $j_1$}
\If{$j_1 \in J$}
\State e.weight $\gets 0$
\Else
\State $e_1 \gets$ e.last\_edge \Comment{Label the last edge on L}
\State J.append($j_1$)
\State $e_1$.next $\gets$ "null" \Comment{In the recursive step, the weight of $e_1$ should not be calculated}
\State \textproc{assign}($j_1$)
\State $e_1$.weight $\gets 1 -$ sum(f.weight \textbf{for} f \textbf{in} ($j_1$.edges $\setminus \{e1\}$))
\State e.weight $\gets (-1)^{\text{e.distance} - 1}e_1$.weight
\EndIf
\EndIf
\EndFor
\EndFunction

\While{$|J| < |\text{junctions}|$}
\State $junct \gets$ (junctions $\setminus J$) [0]
\State J.append(j)
\State \textproc{assign}(j)
\EndWhile

\Function{w}{j}
\Return sum(e.weight for e in j.edges)
\EndFunction

\State result $\gets$ True

\For{j in junctions}
\If {\textproc{w}(j) $\not = 1$}
\State result $\gets$ False \Comment{If w(j) is not 1 for every junction j, then the algorithm returns False}
\EndIf
\EndFor

\If{result}
\For{e in edges}
\If {e.weight = "null"} \Comment{If e is not incident with a junction, it has no weight}
\State e.weight $\gets (-1)^{e.\text{distance}}$e.next\_weight \Comment{We now assign e a weight}
\EndIf
\EndFor
\EndIf

\State\Return result
\end{algorithmic}
\end{algorithm}



For a vertex $v$ of a graph $G$, let $w(v)$ denote the sum of the weights assigned by Algorithm \ref{junction_weight_algorithm} to the edges incident with $v$.

\begin{lemma} \label{algorithm_lemma}
Let ${G}$ be a graph with a difference-1 colouring. Then Algorithm \ref{junction_weight_algorithm} assigns ${w(j) = 1}$ for all junctions ${j}$ in ${G}$.
\end{lemma}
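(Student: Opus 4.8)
My plan is to read the algorithm's output as a signed analogue of the colouring and to compare the two. Given the difference-$1$ colouring $c$, define the colour weight $\omega(e) = +1$ if $e$ is blue and $\omega(e) = -1$ if $e$ is red, so that $\sum_{e \ni v}\omega(e) = deg^B(v) - deg^R(v) = 1$ at \emph{every} vertex $v$. I would first record two structural facts about the weights $x_e$ produced by Algorithm \ref{junction_weight_algorithm}: along any limb joining two junctions the algorithm assigns weights of constant magnitude and alternating sign (by the propagation rule \texttt{e.weight}$\gets(-1)^{\text{e.distance}}\texttt{e.next\_weight}$), and on every \emph{forced} edge it assigns exactly the colour weight that $c$ is obliged to use. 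The latter is where I lean on the earlier results: leaf edges must be blue and twig-base edges red, while Corollary \ref{ltt-corollary} and Lemma \ref{odd-even-distance-lemma} force the colour of each edge on a limb terminating in a twig- or triple-type vertex. Hence $x_e = \omega(e)$ on all forced edges.

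Next I would reduce the statement to a single case. The algorithm effectively performs a depth-first traversal of the \emph{junction graph} $\mathcal{J}$ whose vertices are the junctions of $G$ and whose edges are the junction-to-junction limbs, building a spanning forest; each iteration of the \texttt{while} loop handles one connected component and fixes its root. For every junction $j_1$ first reached as a child, the parent edge $e_1$ is assigned precisely so that the incident weights sum to $1$, so $w(j_1) = 1$ holds automatically. Thus the entire content of the lemma is that $w(r) = 1$ for the root $r$ of each component of $\mathcal{J}$, and \textbf{this is the main obstacle}, since the root's weight is the one quantity the construction does not directly pin down.

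For the root I would use a signed summation confined to one component. Set $\delta_e = x_e - \omega(e)$; by the first paragraph $\delta$ vanishes on every forced edge, so it is supported on junction-to-junction limbs, where both $x$ and $\omega$ are constant-magnitude and alternating. Orienting such a limb $L$ with edges $f_1,\dots,f_{m_L}$, this gives $\delta_{f_i} = (-1)^{i-1}\Delta_L$ for a single limb discrepancy $\Delta_L$, and in particular the values of $\delta$ on the two edges of $L$ meeting its endpoints $a,b$ are $\Delta_L$ and $(-1)^{m_L-1}\Delta_L$. At each junction $j$ we have $\sum_{e \ni j}\delta_e = w(j) - 1$. Now assign $\sigma_j = +1$ or $-1$ according to the part of the bipartition containing $j$, fix a component $\mathcal{C}$ of $\mathcal{J}$, and consider $\sum_{j \in \mathcal{C}}\sigma_j\,(w(j)-1)$. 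Each limb of $\mathcal{C}$ contributes $\Delta_L\big(\sigma_a + (-1)^{m_L-1}\sigma_b\big)$, and since a path of length $m_L$ in a bipartite graph satisfies $\sigma_b = (-1)^{m_L}\sigma_a$, this contribution is $\Delta_L(\sigma_a - \sigma_a) = 0$; forced edges contribute $0$ as well.

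The sum therefore collapses to $\sum_{j \in \mathcal{C}}\sigma_j\,(w(j)-1) = 0$. Because $w(j) = 1$ for every non-root junction of $\mathcal{C}$, only the root term survives, giving $\sigma_r\,(w(r)-1) = 0$ and hence $w(r) = 1$. Applying this to every component and combining with the construction-level equality $w(j) = 1$ at all non-root junctions yields $w(j) = 1$ for all junctions $j$. The delicate points to handle carefully in the write-up are verifying the constant-magnitude/alternating behaviour of $x$ on \emph{every} junction-to-junction limb (including the zero-weight ones closing cycles in $\mathcal{J}$, whose two ends must be checked to be consistently assigned $0$) and confirming that all limbs incident to $\mathcal{C}$ have both endpoints inside $\mathcal{C}$, so that the per-component cancellation is complete.
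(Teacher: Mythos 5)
Your proof is correct and follows essentially the same route as the paper's: both arguments note that the algorithm's weights agree with the signed weights of any difference-1 colouring on all forced edges, that every non-root junction receives $w(j)=1$ by construction of the recursive assignment, and then eliminate the possible discrepancy at the root of each component by a bipartite parity/double-counting argument over the unforced (junction-to-junction) part of the graph. The only difference is bookkeeping: the paper sums both weight functions over the two sides of the bipartition of a whole component of the unforced subgraph, while you perform the equivalent cancellation limb-by-limb using the alternating-sign structure of the weights along limbs.
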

\begin{proof}
$G$ has a difference-1 colouring, which means that there is some function $c_1(e)$ that assigns weights of $\pm 1$ to all edges $e$ of $G$ such that for each vertex $v$ of $G$, the sum of weights $w'(v)$ incident with $v$ is $1$. Algorithm \ref{junction_weight_algorithm} implies a function ${c_2: E(G) \rightarrow \mathbb{Z}}$ of integer weight assignments to each edge $e$ in $G$. If $c_2(e) = \pm 1$ to some edge $e$, this means that $e$ must be blue/red in any difference-1 colouring of $G$. If we remove these edges and any vertices which are now isolated, the resulting graph $H$ consists of connected components which are all subgraphs of $Sk(G)$ and are composed of limbs whose ends are both junctions. On each iteration of the algorithm, a junction $j$ is chosen, and any other junction $j'$ encountered while trying to assign weights to $j$ is assigned weights first such that $w(j') = 1$. This process partitions the junctions in the same way as the connected components of $H$. For a component $K$ of $H$, because $w'(v) = w(v) = 1$ in $G$ for any vertex $v \not = j$, the fact that $H$ results from removing only the edges $e$ with $c_2(e) = \pm 1$ from $G$ means that $w'(v) =w(v)$ in $H$ for any vertex $v \not = j$. We also know that $w'(j) = 1$ in $G$. If $(P_1,P_2)$ is the bipartition of $K$ such that $j \in P_1$, because each edge is incident with a vertex in both parts of the bipartition, we have 
\[\sum_{v \in P_1}w'(v) = \sum_{v \in P_2}w'(v) \hspace{1cm} \text{and} \hspace{1cm} \sum_{v \in P_2}w(v) = \sum_{v \in P_1}w(v)\text{.}\]
And because $w'(v) = w(v)$ for all vertices in $Q$, we have
\[\sum_{v \in P_2}w'(v) = \sum_{v \in P_2}w(v)\text{.}\]
Which therefore gives us
\[\sum_{v \in P_1}w'(v) = \sum_{v \in P_1}w(v)\text{.}\]
And because $w'(v) = w(v)$ for all $v \not = j$, this implies that $w'(j)= w(j)$. Therefore the algorithm has assigned $w(j) = 1$ for all junctions $j$ in $G$.
\end{proof}

Note that the weights assigned to each edge incident with a junction can vary depending on the order in which the algorithm deals with the junctions and edges, but the result (whether or not the algorithm concludes that ${G}$ has a difference-1 colouring) is independent of such choices.

\*

It is possible that a graph will have only junctions whose incident weights sum to ${1}$ but have no difference-1 colouring. This is because Algorithm \ref{junction_weight_algorithm} can assign weights to edges ${x(e)}$ that have magnitude greater than ${1}$. We call these weights \emph{surplus weights}, and they (as well as zero weights) arise in the case of edges whose colour may differ in distinct difference-1 colourings of a graph, if any such colourings exist. In order to assign a particular colouring to a graph, these surplus weights must be \emph{redistributed} so that every edge has weight ${1}$ or ${-1}$, while maintaining ${w(j) = 1}$ for each junction ${j}$. If this is possible, we say that the surplus weights are \emph{redistributable}, and we have a criterion for exactly when the surplus weights of any graph are redistributable, which will be outlined in Section \ref{redistrib}.

\begin{example}
Here, some edges of $G$ have been assigned surplus weights. These surplus weights are redistributable, as the sum of the weights at each junction in $Sk(G)$ before and after redistribution remain constant.

\includegraphics[width = \textwidth]{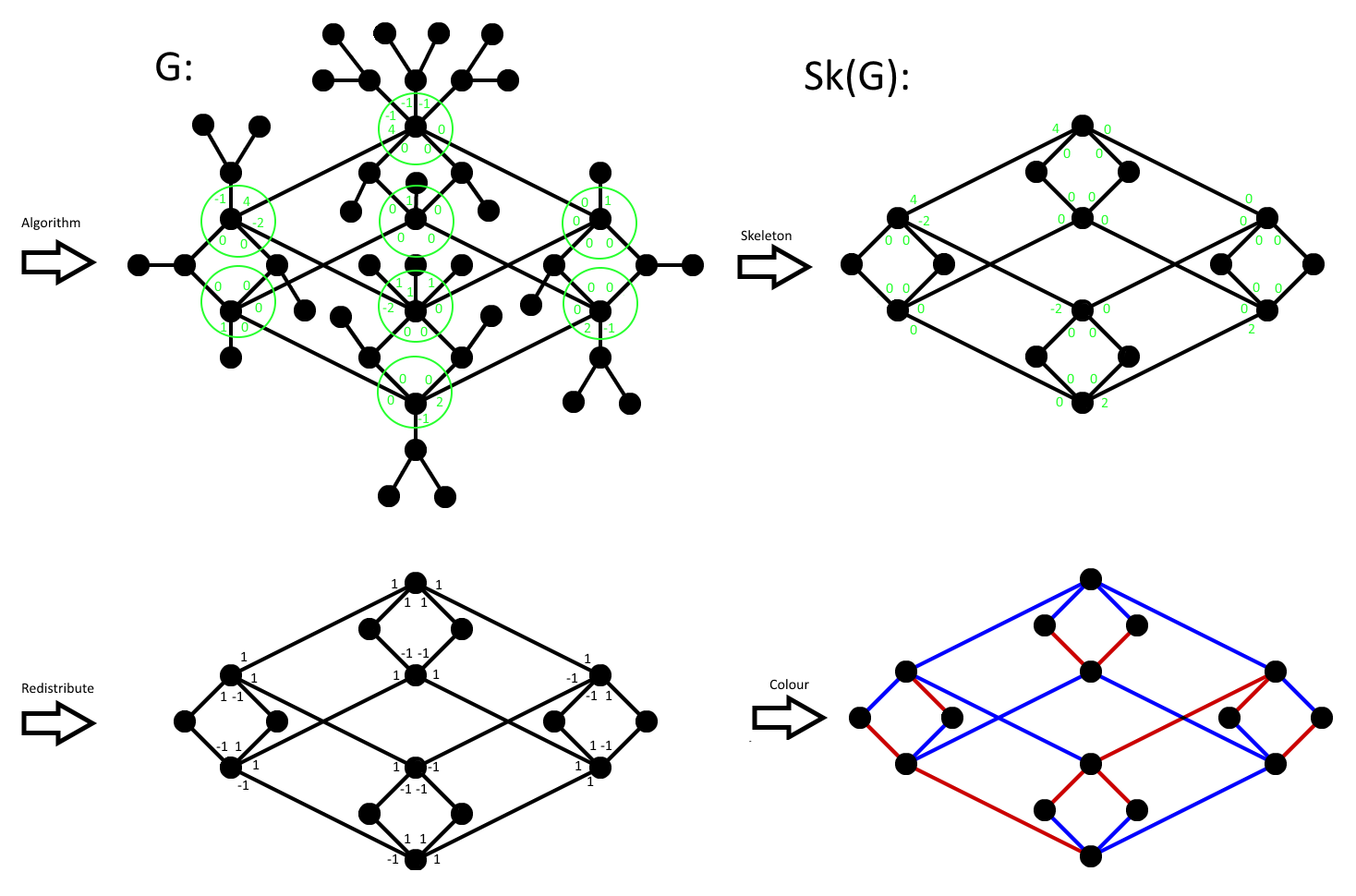}
\end{example}


\subsection{Redistributability}\label{redistrib}
We now consider the situation where Algorithm \ref{junction_weight_algorithm} cannot decide on the existence of a difference-1 colouring for a graph ${G}$. This means that the algorithm assigns weights to the edges of ${G}$ such that ${w(j) = 1}$ for each junction $j$ in $G$, but some of the weights assigned to the edges are surplus weights.

\begin{definition} Let $G$ be a graph for which Algorithm \ref{junction_weight_algorithm} could not determine the existance of a difference-1 colouring, and let $e$ be an edge of $G$ that has been assigned a surplus weight. A \emph{redistribution} of the weight of $e$ is a redefining of the weight of $e$ and of some set $S$ of the other edges of $G$ such that every weight that has been redefined now has a value of $1$ or $-1$, the sum of the weights of all edges incident with each vertex remains $1$, and there is no proper subset of $S$ for which this is possible. We say the surplus weights of a graph are \emph{redistributable} if it is possible to redistribute all surplus weights in $G$ successively. \end{definition}

In order to determine whether or not the surplus weights of a graph are redistributable, we can partition the edge set of the graph into equivalence classes and consider each equivalence class separately, as follows.

\begin{definition} Let ${E'(G)}$ be the set of all edges of ${G}$ that belong to a cycle. For ${e_1, e_2 \in E'(G)}$, ${e_1 \sim e_2}$ if and only if ${e_1}$ and ${e_2}$ occur together  in a cycle of ${G}$. \end{definition}

\begin{lemma} \label{ced_partition} ${\sim}$ is an equivalence relation on $E'(G)$. \end{lemma}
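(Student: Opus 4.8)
The plan is to verify the three axioms of an equivalence relation, with essentially all the difficulty concentrated in transitivity. \emph{Reflexivity} is immediate from the definition of $E'(G)$: each $e \in E'(G)$ lies on some cycle $C$, and then $e$ and $e$ occur together on $C$, so $e \sim e$. \emph{Symmetry} is built into the definition, since ``$e_1$ and $e_2$ occur together in a cycle'' is a symmetric condition, so $e_1 \sim e_2$ gives $e_2 \sim e_1$ at once. For \emph{transitivity}, suppose $e_1 \sim e_2$ and $e_2 \sim e_3$, witnessed by a cycle $C_1$ containing $e_1, e_2$ and a cycle $C_2$ containing $e_2, e_3$; the task is to produce a single cycle through both $e_1$ and $e_3$. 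First I would dispose of degenerate cases: if two of the edges coincide the claim is trivial, if $e_1$ already lies on $C_2$ then $C_2$ itself works, and symmetrically if $e_3$ lies on $C_1$. So I may assume $e_1, e_2, e_3$ distinct with $e_1 \notin C_2$ and $e_3 \notin C_1$. Writing $e_2 = xy$, I delete $e_2$ from each cycle to obtain two $x$--$y$ paths $P_1 = C_1 - e_2 \ni e_1$ and $P_2 = C_2 - e_2 \ni e_3$.

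A warning worth recording is that the naive idea of taking the symmetric difference $C_1 \triangle C_2$ fails: this is an even subgraph, hence an edge-disjoint union of cycles, but $e_1$ and $e_3$ can land in \emph{different} cycles of that decomposition, so it need not exhibit a common cycle. Instead I would splice the two paths directly. Writing $e_1 = a_1 b_1$ with $a_1$ preceding $b_1$ along $P_1$, I isolate the maximal detour of $P_1$ around $e_1$ that avoids $P_2$: let $z'$ be the first vertex of $P_2$ encountered when walking from $a_1$ towards $x$ along $P_1$, and $z$ the first vertex of $P_2$ encountered when walking from $b_1$ towards $y$ along $P_1$; both exist since $x,y \in P_2$. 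The subpath $M = P_1[z',z]$ then contains $e_1$, satisfies $z' \neq z$ (these vertices lie on opposite sides of $e_1$ along $P_1$), and by minimality meets $P_2$ only at its endpoints $z',z$.

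With $M$ in hand I would orient $P_2$ from $x$ to $y$ and assume, without loss of generality, that $z'$ precedes $z$ on $P_2$. The position of $e_3$ then splits into three cases, according to whether $e_3$ lies on the arc $P_2[x,z']$, the arc $P_2[z',z]$, or the arc $P_2[z,y]$. In the middle case the cycle $M \cup P_2[z',z]$ already contains both $e_1$ and $e_3$. In the two outer cases I close a cycle using $e_2$: for instance $x \xrightarrow{e_2} y \to z \xrightarrow{M} z' \to x$, traversing the appropriate outer arc of $P_2$, passes through $e_1$ inside $M$ and through $e_3$ on the outer arc, and the symmetric construction handles the remaining case. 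In every case the arcs employed are pairwise internally disjoint (because $M$ meets $P_2$ only at $z',z$, and the two outer arcs of $P_2$ are disjoint), so the result is a genuine simple cycle through $e_1$ and $e_3$, giving $e_1 \sim e_3$.

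The main obstacle is exactly this transitivity step: building the common cycle forces the careful choice of the switch vertices $z',z$ together with the case analysis above, since the symmetric-difference remark shows that no purely formal manipulation of the two given cycles will do. (An alternative, less elementary route is to subdivide each of $e_1,e_2,e_3$ and invoke the fact that two vertices lie on a common cycle if and only if they lie in the same block, so that block membership supplies the partition directly; but the path-splicing argument keeps the proof self-contained.)
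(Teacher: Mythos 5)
Your proof is correct, and its overall strategy coincides with the paper's: reflexivity and symmetry are immediate, and for transitivity both arguments isolate a detour of $C_1$ around $e_1$ that meets $C_2$ in exactly two vertices (your $M = P_1[z',z]$; the paper's $P_{u_1}\cup P_{v_1}$, whose endpoints $u,v$ play the roles of your $z',z$) and then splice it into $C_2$. The genuine difference is in how the splice is completed, and here your version is the stronger one. The paper joins $u$ and $v$ to the endpoint $u_3$ of $e_3$ by the two arcs of $C_2$ that avoid $e_2$, and asserts that the four paths form a cycle containing $e_1$ and $e_3$. That assertion is only valid when $e_3$ lies on the arc of $C_2 - e_2$ between $u$ and $v$ (your middle case, where you build the same cycle $M \cup P_2[z',z]$). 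If instead $e_3$ lies on one of the outer arcs, the paper's two arcs overlap, their union with the detour is not a cycle, and in fact no cycle inside $C_1 \cup C_2$ avoiding $e_2$ need contain both $e_1$ and $e_3$; one is forced to close the cycle through $e_2$ itself, exactly as you do with $e_2 \cup P_2[z,y] \cup M \cup P_2[x,z']$ (note the paper's claimed cycle never uses $e_2$). So your case analysis is not extra pedantry: it repairs a real gap in the paper's own argument. Two smaller remarks: your single cycle through $e_2$ already traverses both outer arcs, so it settles both outer cases at once and no ``symmetric construction'' is needed; and your side observations --- that the symmetric difference $C_1 \triangle C_2$ may place $e_1$ and $e_3$ in different cycles of its decomposition, and that a subdivision-plus-blocks argument gives an alternative proof --- are both sound.
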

\begin{proof}
It is immediate that ${\sim}$ is reflexive and symmetric. For transitivity, let ${e_1 = u_1v_1}$, ${e_2 = u_2v_2}$, and ${e_3 = u_3v_3}$ be edges of ${G}$ with ${e_1 \sim e_2}$ and ${e_2 \sim e_3}$. Let $C_1$ be a cycle containing both $e_1$ and $e_2$, and $C_2$ be a cycle containing both $e_2$ and $e_3$. If $C_1 = C_2$, then $e_1 \sim e_3$. Assume $C_1 \not = C_2$.

Let ${P_{u_1}}$ be the path in ${C_1}$ starting at ${u_1}$ and ending at a vertex ${u}$ in ${C_2}$, that does not include ${e_1}$ and contains no other vertex in ${C_2}$. Let ${P_{v_1}}$ be the path in ${C_1}$ starting at ${u_1}$ and ending at a vertex ${v}$ in ${C_2}$ that does include ${e_1}$ and contains no other vertex in ${C_2}$. We know that ${u}$ and ${v}$ are distinct, because ${e_2}$ is common to both ${C_1}$ and ${C_2}$. Let ${P_{u_3}}$ be the path in ${C_2}$ starting at ${u}$ and ending at ${u_3}$ which does not include ${e_2}$, and let ${P_{v_3}}$ be the path in ${C_2}$ starting at ${v}$ and ending at ${u_3}$ which does not include ${e_2}$. ${P_{u_1}}$, ${P_{v_1}}$, ${P_{u_3}}$, and ${P_{v_3}}$ form a cycle which contains ${e_1}$ and ${e_3}$.

Therefore ${e_1 \sim e_2, e_2 \sim e_3 \implies e_1 \sim e_3}$; the relation is transitive.
\end{proof}

\n\textsc{Note:} We refer to the equivalence classes of ${\sim}$ as the \emph{common cycle classes} of ${E'(G)}$.

\begin{lemma}\label{common_cycle_class}
Let $G$ be a graph with a difference-$1$ colouring whose edges have been assigned weights by Algorithm \ref{junction_weight_algorithm}. Then all distinct difference-$1$ colourings of $G$ can be obtained by redistributing any surplus and zero weights of $G$ within common cycle classes.
\end{lemma}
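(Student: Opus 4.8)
The plan is to translate the combinatorial statement into linear algebra on the edge space and to exploit the fact that the differences between distinct difference-1 colourings constitute the cycle space of $G$. I identify each difference-1 colouring $c$ with the $\pm 1$-valued weight function it induces (blue $\mapsto 1$, red $\mapsto -1$), so that the defining relation \eqref{degB-degR} becomes exactly $w(v) = 1$ at every vertex $v$. By Lemma \ref{algorithm_lemma}, together with the forced propagation of weights along limbs and local trees in the final loop of Algorithm \ref{junction_weight_algorithm}, the integer weighting $x$ produced by the algorithm also satisfies $w(v) = 1$ at every vertex, the difference being that some of its values are surplus or zero rather than $\pm 1$.

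First I would show that the difference of any two integer weightings with vertex-sum $1$ lies in the cycle space of $G$. If $y$ and $z$ both satisfy $w(v) = 1$ everywhere, then $d = y - z$ satisfies $\sum_{e \ni v} d(e) = 0$ for every vertex $v$, so $d$ lies in the kernel of the incidence matrix $B$ of $G$. Here the hypothesis that $G$ is bipartite is essential: orienting every edge from $P_1$ to $P_2$ and negating the rows of $B$ indexed by $P_2$ converts $B$ into the signed incidence matrix of $G$ without altering its kernel, and the kernel of the signed incidence matrix is precisely the cycle space of $G$. Applying this to $d = c - x$ for a difference-1 colouring $c$ shows that $c - x$ is an integer combination of signed cycle indicators, and in particular vanishes on every edge of $G$ that lies on no cycle.

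Next I would record that the cycle space decomposes along the common cycle classes. Every simple cycle $C$ has all of its edges pairwise related by $\sim$, since they occur together in $C$, so $C$ is contained in a single common cycle class; as the cycle space is spanned by such cycles, any element decomposes as a sum $\sum_K d_K$ in which each $d_K$ is itself a cycle-space element supported on the common cycle class $K$. I would then identify a single redistribution, in the sense of the definition, with the addition of one such cycle indicator: the minimality of the set $S$ forces $\{e\} \cup S$ to be the support of a cycle-space element of minimal support, which is exactly a simple cycle, and that cycle lies in one class. Consequently a redistribution only ever alters edges within one common cycle class, never touches an edge lying on no cycle, and preserves $w(v) = 1$.

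To conclude, given an arbitrary difference-1 colouring $c$ I would write $c - x = \sum_K d_K$ as above and realise the change on each class $K$ through a sequence of within-class redistributions, processing the surplus and zero edges of $K$ one at a time until they have all become $\pm 1$ and $x$ has been carried to $c$; conversely any such redistribution keeps every vertex-sum equal to $1$ and, once the surplus and zero weights have all been reduced to $\pm 1$, yields a difference-1 colouring. Thus the difference-1 colourings of $G$ are exactly the weightings reachable from $x$ by redistributing surplus and zero weights within common cycle classes. I expect the main obstacle to be the linear-algebraic identification of $\ker B$ with the cycle space, which genuinely uses bipartiteness, together with the matroid fact that the inclusion-minimal supports in the cycle space are the simple cycles — the step that pins each elementary redistribution to a single common cycle class.
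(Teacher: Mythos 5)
Your main line is sound and, structurally, it reaches the paper's conclusion by the same skeleton: compare the algorithm's weighting $x$ with a difference-1 colouring $c$, note that the difference has vertex sums zero everywhere, decompose that difference into cycles, and observe that every simple cycle lies inside a single common cycle class. The difference is in technology. The paper proves the decomposition by hand: it defines $\delta(e) = x(e) - c(e)$ on the disagreement subgraph, builds a walk in which consecutive edges carry $\delta$-weights of opposite sign, closes it into a cycle (even length, by bipartiteness), reduces all magnitudes on that cycle by $1$, and inducts. You instead identify $c - x$ as an element of the kernel of the incidence matrix, identify that kernel with the cycle space (bipartiteness entering through the row-negation trick rather than through even cycle lengths), and cite the standard decomposition of integral circulations into simple cycle vectors. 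These are the same decomposition; you cite what the paper proves constructively. Your version is shorter and cleaner, at the cost of importing the fact that every \emph{integer} kernel vector is an \emph{integer} combination of simple cycle indicators (true, e.g.\ via fundamental cycles of a spanning tree, but it should be stated, since redistributions must be integral).

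Two points need repair. First, your identification of a single redistribution with the addition of one simple cycle indicator is false in general. If a junction $j$ is incident with four leaves (weight $1$ each), two skeleton edges of weight $0$, and a limb edge $e$, the algorithm assigns $x(e) = 1 - 4 = -3$; in a difference-1 colouring the three non-leaf edges at $j$ must all be red, so the change required at $e$ is $+2$ and at each zero-weight edge is $-1$. The change vector of a minimal redistribution of $e$ is then a sum of two cycles through $e$, not an inclusion-minimal element of the cycle space, so the matroid-circuit argument does not apply to it. The conclusion you want (a minimal redistribution stays inside one class and preserves vertex sums) still holds, but the argument should be that the change vector is a circulation whose minimality forbids splitting off a sub-circulation avoiding $e$, so every cycle in its decomposition shares an edge-path of cycles with $e$ and hence lies in the class of $e$. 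Second, your cycle-space argument shows only that $c - x$ vanishes off $E'(G)$; it does not by itself show $c - x$ vanishes on edges that lie on cycles but were assigned weight $\pm 1$ by Algorithm \ref{junction_weight_algorithm} (e.g.\ limb edges forced by a twig-type endpoint). To conclude that the difference is supported on surplus and zero weights — which the lemma requires — you must also invoke the fact, used in the proof of Lemma \ref{algorithm_lemma}, that an edge assigned $\pm 1$ by the algorithm has that colour in \emph{every} difference-1 colouring. Both gaps are fixable without changing your overall strategy.
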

\begin{proof}
Define $x: E(G) \to \mathbb{Z}$ to be a function that sends edges of $G$ to the weights that they were assigned by Algorithm \ref{junction_weight_algorithm}, and $x': E(G) \to \{1,-1\}$ to be a redistribution of the surplus weights of $G$. We know that at any junction $j$, the sums of the values assigned by each of these functions to the edges incident with $j$ are $1$, and we also have that $x'(e) = x(e)$ for all $e$ for which $x(e) \in \{1,-1\}$. Let us consider the subgraph $D$ of $G$, consisting of all edges $e$ for which $x(e) \not = x'(e)$, and their incident vertices. Define the $\delta$-weight $\delta : E(D) \to \mathbb{Z}$ by $\delta (e) = x(e)-x'(e)$. Note that for all edges in $D$, the $\delta$-weight is a non-zero integer, and the sum of the $\delta$-weights of edges incident with a vertex in $D$ is $0$. Thus each vertex of $D$ is incident with at least one edge of positive $\delta$-weight and one of negative $\delta$-weight.

\*

Choose any neighbouring vertices $u_0$ and $u_1$ in $D$ and then repeat the following step; on step $i$, choose a vertex $u_{i+1}$ such that ${\delta (u_iu_{i+1})}$ is of the opposite sign to ${\delta (u_{i-1}u_i)}$. This process terminates at some step $j$ when we choose a vertex $u_{j+1} = u_{k}$, where $0 \leq k \leq j$. Because $G$ is bipartite, the cycle $u_ku_{k+1} \dots u_ju_k$ that results from this process is of at least length $4$ and each vertex of the cycle has incident edges with $\delta$-weight of opposite sign. We now define the graph $D'$ to be the graph $D$ with the $\delta$-weight of each edge in this cycle reduced in magnitude by $1$ while retaining its sign, and any edge that now has weight $0$ is deleted. Note that all $\delta$-weights in $D'$ are non-zero integers and the sum of all $\delta$-weights incident with a vertex of $D'$ is $0$, and thus we can repeat this process until we are left with an empty graph. Therefore $D$ can be decomposed into cycles, which implies that surplus weights can be redistributed to other edges in the same common cycle class.
\end{proof}

We now consider when it is possible to redistribute surplus weights in a cactus graph.

\begin{lemma}\label{redistribute_CED} Let ${G}$ be a cactus graph that passes Algorithm \ref{junction_weight_algorithm} and whose edges have been assigned weights. For a cycle $C$ in $G$, let $w_C(v)$ denote the sum of the weights assigned to the edges of $C$ incident with a vertex $v$ in $C$. Then the surplus weights of $G$ are redistributable if and only if $w_C(v) \in \{-2,0,2\}$ for all cycles $C$ in $G$ and vertices $v$ in $C$, and for every path $P = v_1v_2\dots v_k$ in $C$ with $w_C(v_1),w_C(v_k) = \pm 2$ and $w_C(v_i) = 0$ for all $1 < i < k-1$, $k$ is even if $w_C(v_1) = w_C(v_k)$ and odd if $w_C(v_1) = -w_C(v_k)$. \end{lemma}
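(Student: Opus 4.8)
The plan is to exploit the cactus structure to reduce the global redistribution problem to an independent sign-assignment problem on each individual cycle, and then solve that problem by propagating the values $\pm 1$ around the cycle. First I would observe that since $G$ is a cactus, every edge lies in at most one cycle, so the common cycle classes of Lemma \ref{ced_partition} are exactly the individual cycles of $G$. By Lemma \ref{common_cycle_class}, every redistribution of surplus and zero weights takes place within a single such cycle, and altering the weights on the edges of a cycle $C$ touches no edge of any other cycle. To keep $w(v) = 1$ at each $v \in C$ while leaving the non-cycle edges at $v$ unchanged, a redistribution must preserve the partial sum $w_C(v)$ for every $v \in C$. Since two cycles of a cactus share at most one vertex, and that shared vertex contributes to $w(v)$ through each cycle separately, redistributions in distinct cycles are mutually independent. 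Hence the surplus weights of $G$ are redistributable if and only if, for each cycle $C$, one can choose weights $y(e) \in \{1,-1\}$ on the edges of $C$ preserving $w_C(v)$ at every $v \in C$.

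Next I would fix a single cycle $C = v_1 v_2 \cdots v_n v_1$ with cyclically indexed edges $e_i = v_i v_{i+1}$, so that $w_C(v_i) = x(e_{i-1}) + x(e_i)$, and ask when weights $y(e_i) \in \{1,-1\}$ exist with $y(e_{i-1}) + y(e_i) = w_C(v_i)$ for every $i$. Since any sum of two elements of $\{1,-1\}$ lies in $\{-2,0,2\}$, the condition $w_C(v_i) \in \{-2,0,2\}$ is immediately necessary, which is the first clause. Granting it, each vertex imposes a local rule on the new weights: a vertex with $w_C(v_i) = \pm 2$ pins both incident cycle edges to the value $w_C(v_i)/2$, whereas a vertex with $w_C(v_i) = 0$ forces $y(e_i) = -y(e_{i-1})$, a sign flip.

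I would then analyse a maximal arc $P = v_1 v_2 \cdots v_k$ whose endpoints satisfy $w_C(v_1), w_C(v_k) = \pm 2$ and whose interior vertices all have $w_C = 0$ (this is how I read the second hypothesis). The endpoint $v_1$ pins $y(e_1) = w_C(v_1)/2$; the $k-2$ zero-vertices in the interior then propagate this value with alternating sign, giving $y(e_{k-1}) = (-1)^{k-2}w_C(v_1)/2 = (-1)^{k}w_C(v_1)/2$; and the pin at $v_k$ demands $y(e_{k-1}) = w_C(v_k)/2$. These are compatible exactly when $(-1)^k w_C(v_1) = w_C(v_k)$, that is, $k$ even when $w_C(v_1) = w_C(v_k)$ and $k$ odd when $w_C(v_1) = -w_C(v_k)$, which is precisely the second clause. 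For sufficiency I note that the $\pm 2$ vertices decouple the cycle into such arcs: each $\pm 2$ vertex fully determines the weights of both its incident edges, so distinct arcs place no constraint on one another beyond their individual parity conditions, and the second clause guarantees each arc can be filled in consistently. In the degenerate case where $C$ has no $\pm 2$ vertex (all $w_C(v_i)=0$) I would simply alternate $y$ around $C$; because $G$ is bipartite the length $n$ is even, so the alternation closes up and the second clause is vacuous. This yields the required $\pm 1$ reassignment on every cycle, completing the equivalence.

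The step I expect to demand the most care is the reduction in the first paragraph: justifying that \emph{redistributable}, defined through successive minimal redistributions each keeping every $w(v)=1$, is genuinely equivalent to the per-cycle existence of a $w_C$-preserving $\pm 1$ reassignment. The cactus hypothesis is what makes this clean, since within one cycle the differences $\delta(e) = x(e) - y(e)$ satisfy $\delta(e_{i-1}) + \delta(e_i) = 0$ and hence alternate in sign, forcing them to be either all nonzero or all zero; thus each cycle is redistributed as a single unit and independently of the others. This point deserves to be spelled out rather than asserted. The cycle-level argument itself is then a routine propagation-and-parity computation, the only genuine subtlety being the verification that the $\pm 2$ vertices truly decouple the arcs so that no additional global closing-up constraint survives.
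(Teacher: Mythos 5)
Your proposal is correct and takes essentially the same route as the paper: the cactus property together with Lemma \ref{common_cycle_class} reduces redistributability to an independent, $w_C$-preserving $\pm 1$ reassignment on each individual cycle; necessity of $w_C(v)\in\{-2,0,2\}$ and the parity clause then follow from sign alternation across the zero-weight vertices, and sufficiency from the same alternating propagation of $\pm\frac{w_C(v_1)}{2}$ along arcs between $\pm 2$ vertices. You are somewhat more explicit where the paper is terse (the pinning/decoupling of arcs at $\pm 2$ vertices, and the all-zero cycle case, which the paper's sufficiency argument passes over silently), but this is a refinement of the same argument rather than a different one.
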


\begin{proof}
First, assume that the surplus weights of $G$ are redistributable. From Lemma \ref{common_cycle_class}, the surplus weights of $G$ can be redistributed within the common cycle classes of $G$. $G$ is a cactus graph, which means that the common cycle classes of $G$ are the cycles of $G$. Let $C$ be a cycle of $G$. As any redistribution of the surplus weights involves all edges being assigned weights of $\pm 1$, this means that $w_C(v) \in \{-2,0,2\}$ for any vertex $v$ in $C$. If there is a path $P = v_1v_2\dots v_k$ in $C$ such that $w_C(v_1),w_C(v_k) = \pm 2$ and $w_C(v_i) = 0$ for all $1 < i < k-1$, this means that $v_i$ is incident with one edge of each colour in any difference-1 colouring, while $v_1$ and $v_k$ are incident with two edges of the same colour. If $v_1v_2$ is the same colour as $v_{k-1}v_k$, then $k$ must be even and $w_C(v_1) = w_C(v_k)$. If $v_1v_2$ is the opposite colour to $v_{k-1}v_k$, then $k$ must be odd and $w_C(v_1) = -w_C(v_k)$.

\*

Now assume $w_C(v) \in \{-2,0,2\}$ for all cycles $C$ in $G$ and vertices $v$ in $C$, and for every path $P = v_1v_2\dots v_k$ in $C$ with $w_C(v_1),w_C(v_k) = \pm 2$ and $w_C(v_i) = 0$ for all $1 < i < k-1$, $k$ is even if $w_C(v_1) = w_C(v_k)$ and odd if $w_C(v_1) = -w_C(v_k)$. We can redistribute the weights incident with $v_1$ and $v_k$ by assigning a weight of $x = \frac{w_C(v_1)}{2}$ to $v_1v_2$, a weight of $-x$ to $v_2v_3$, \dots, a weight of $(-1)^kx$ to $v_k$. As $k$ is even if $w_C(v_1) = w_C(v_k)$ and odd if $w_C(v_1) = -w_C(v_k)$, this means that $w_C(v_k) = (-1)^kx$. Therefore the surplus weights of $G$ are redistributable.
\end{proof}

To determine in general when surplus weights can be redistributed in a graph $G$  that passes Algorithm \ref{junction_weight_algorithm} and whose edges have been assigned weights, let ${G'}$ be a subgraph of $G$ consisting of all edges from one common cycle class of $G$ and their incident vertices. For each vertex ${v}$ of ${G'}$, let $r(v)$ denote the number of edges of ${G'}$ incident with ${v}$ which are required to be red in any difference-1 colouring of ${G}$ (as determined by Algorithm \ref{junction_weight_algorithm}). Therefore ${r(v)}$ is ${0}$, ${1}$, or ${2}$, if ${v}$ is a twig-type, leaf-type, or triple-type vertex, respectively, and ${r(v) = \frac{1}{2}\big(deg_{G'}(v) - \sum_{uv \in E(G')} x(uv)\big)}$ if ${v}$ is a junction. If we can find a subgraph ${H}$ of ${G'}$ for which ${deg_H(v) = r(v)}$ for each vertex ${v}$ in ${G'}$, then an edge colouring of ${G'}$ where all edges of ${H}$ are coloured red and the remaining edges are coloured blue is consistent with a difference-1 colouring of ${G}$. Therefore, in order to determine if all surplus weights of a graph are redistributable, we need to find a subgraph ${H}$ of $G'$, for each common cycle class $G'$, such that ${deg_H(v) = r(v)}$ for all vertices of ${H}$. The following theorem tells us when this is possible.

\begin{theorem}\label{multimatching} Let ${G}$ be a bipartite graph with bipartition ${(P_1, P_2)}$, and for each vertex ${v}$ of ${G}$, let ${r(v)}$ be an integer value in the range ${0}$ to ${deg_G(v)}$ such that $\sum_{v \in P_1} r(v) \leq \sum_{v \in P_2} r(v)$. Then ${G}$ has a subgraph ${H}$ with ${deg_H(v) = r(v)}$ for every $v \in P_1$ and ${deg_H(v) \leq r(v)}$ for every $v \in P_2$ if and only if every subset ${S}$ of ${P_1}$ in ${G}$ satisfies
\[\sum_{v \in S} r(v) \leq \sum_{n \in \Gamma(S)} \min\{r(n), |\Gamma(n) \cap S|\}\text{,}\]
where ${\Gamma(S)}$ is the set of neighbours of elements of ${S}$ in ${G}$, and ${\Gamma(n)}$ is the set of neighbours of ${n}$.\end{theorem}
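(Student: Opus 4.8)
The plan is to prove necessity by a direct double-counting argument and sufficiency via a maximum-flow/minimum-cut computation, with the evaluation of the minimum cut as the technical heart of the proof.

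For the (easy) necessity direction, suppose such a subgraph $H$ exists and fix a subset $S \subseteq P_1$. I would count the edges of $H$ having an endpoint in $S$ in two ways. On the $S$-side, each $v \in S$ contributes exactly $\deg_H(v) = r(v)$ such edges, for a total of $\sum_{v \in S} r(v)$. On the $P_2$-side, every such edge terminates at a vertex of $\Gamma(S)$, and a given $n \in \Gamma(S)$ receives at most $\deg_H(n) \le r(n)$ edges of $H$ altogether and, because $H$ is a simple graph, at most $|\Gamma(n) \cap S|$ edges coming from $S$; hence at most $\min\{r(n), |\Gamma(n)\cap S|\}$. Summing over $\Gamma(S)$ yields the stated inequality, so the Hall-type condition is necessary.

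For sufficiency I would build a flow network with a source $s$, a sink $t$, an arc $s \to v$ of capacity $r(v)$ for each $v \in P_1$, an arc $v \to n$ of capacity $1$ for each edge $vn$ of $G$, and an arc $n \to t$ of capacity $r(n)$ for each $n \in P_2$. The capacity-$1$ middle arcs are precisely what force the resulting subgraph to be simple. By the integrality theorem for maximum flows, an integral flow exists attaining value $\sum_{v \in P_1} r(v)$ whenever the maximum flow equals this number; the saturated middle arcs of such a flow pick out a subgraph $H$ with $\deg_H(v) = r(v)$ for $v \in P_1$ (because the source arc at $v$ is saturated) and $\deg_H(n) \le r(n)$ for $n \in P_2$ (by the sink capacities), and conversely any such $H$ induces a flow of that value. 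Thus the existence of $H$ is equivalent to the maximum flow being $\sum_{v\in P_1} r(v)$, and by max-flow/min-cut this is equivalent to every $s$--$t$ cut having capacity at least $\sum_{v \in P_1} r(v)$.

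The main obstacle is evaluating the minimum cut. I would parametrize a cut by the set $S := P_1 \cap (\text{source side})$ together with the set $T \subseteq P_2$ lying on the sink side; its capacity is
\[ \sum_{v \in P_1 \setminus S} r(v) \;+\; \sum_{n \in P_2 \setminus T} r(n) \;+\; \sum_{n \in T} |\Gamma(n) \cap S|. \]
For fixed $S$ the last two terms equal $\sum_{n \in P_2} r(n) + \sum_{n \in T}\big(|\Gamma(n)\cap S| - r(n)\big)$, which is minimised by placing $n$ on the sink side exactly when $|\Gamma(n) \cap S| < r(n)$; using the identity $r(n) + \min\{0, |\Gamma(n)\cap S| - r(n)\} = \min\{r(n), |\Gamma(n)\cap S|\}$ and discarding the vanishing terms with $n \notin \Gamma(S)$, this collapses to $\sum_{n \in \Gamma(S)} \min\{r(n), |\Gamma(n)\cap S|\}$. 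Hence the smallest cut with a given $S$ has capacity $\sum_{v \in P_1 \setminus S} r(v) + \sum_{n \in \Gamma(S)} \min\{r(n), |\Gamma(n)\cap S|\}$, and requiring this to be at least $\sum_{v \in P_1} r(v)$ for every $S$ becomes, after cancelling $\sum_{v \in P_1 \setminus S} r(v)$, exactly the inequality in the statement. The hypothesis $\sum_{v \in P_1} r(v) \le \sum_{v \in P_2} r(v)$ ensures that the degenerate cut severing every sink arc does not already drop below $\sum_{v \in P_1} r(v)$; in fact this is just the $S = P_1$ instance of the condition, so it is subsumed. Everything beyond the cut computation—integrality, the flow-to-subgraph correspondence, and the counting argument of the necessity direction—is routine.
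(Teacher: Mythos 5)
Your proposal is correct and follows essentially the same route as the paper: you construct the identical flow network (source arcs of capacity $r(v)$ into $P_1$, unit-capacity middle arcs, sink arcs of capacity $r(n)$ out of $P_2$), reduce existence of $H$ to the maximum flow attaining $\sum_{v \in P_1} r(v)$, and evaluate the minimum cut by optimising over which $P_2$-vertices sit on the source side, exactly as the paper does with its cut $(X',Y')$. The only cosmetic difference is that you handle the necessity direction by a direct double-count on $H$ rather than reading it off the same max-flow/min-cut equivalence, which is a harmless simplification.
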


Note that the problem of redistributability is precisely the case where $\sum_{v \in P_1} r(v) = \sum_{v \in P_2} r(v)$, as we require all vertices of $v$ in $P_2$ to satisfy $deg_H(v) = r(v)$ as well as for all $v$ in $P_1$. Therefore the surplus weights in a common cycle class $G'$ with bipartition $(P_1,P_2)$ are redistributable if and only if every subset ${S}$ of ${P_1}$ in ${G'}$ satisfies
\[\sum_{v \in S} r(v) \leq \sum_{n \in \Gamma(S)} \min\{r(n), |\Gamma(n) \cap S|\}\text{,}\]
where ${r(v)=0}$ if ${v}$ is a twig-type vertex, ${r(v)=1}$ if ${v}$ is a leaf-type vertex, ${r(v)=2}$ if ${v}$ is a triple-type vertex and 

\[r(v) = \frac{1}{2}\big(deg_{G'}(v) - \sum_{uv \in E(G')} x(uv)\big)\]

if ${v}$ is a junction.

\*

Before we prove this result, we recall some concepts relating to \emph{flow networks}. For our purposes, a flow network is a directed graph $G$ in which every arc is assigned a positive integer weight, called its capacity. The network has exactly one vertex $s$ of indegree zero and positive outdegree, called the source, and exactly one vertex $t$ of outdegree zero and positive indegree, called the sink. A \emph{flow} is an assignment to every arc of a non-negative integer at most equal to its capacity, with the property that for every vertex $v \not \in \{s, t\}$, the total of the flows on arcs directed into $v$ is equal to the total of the flows on arcs directed out of $v$. The total flow is the sum of the flows on all arcs directed out of $s$. The maximum flow of $G$ is the maximum possible total flow over all flows that can be assigned to $G$ (for a fixed assignment of capacity).

\*

A \emph{cut} of a flow network ${G}$ is a partition ${(X,Y)}$ of the vertex set of ${G}$, such that ${s \in X}$ and ${t \in Y}$. The \emph{cut capacity} of a cut ${(X,Y)}$ is the sum of the capacities of all arcs $(u,v)$ from vertex $u$ to vertex $v$ such that ${u \in X, v \in Y}$. We now prove Theorem \ref{multimatching} by interpreting $G$ as a flow network, and applying the \emph{Max-Flow Min-Cut Theorem} \cite{maxminbib}, which states that the maximum flow in a flow network is bounded above by all cut capacities and is equal to the minimum cut capacity.

\begin{proof} 
We construct a directed graph ${G^*}$, with ${V(G^*) = V(G) \cup \{s,t\}}$. The edge set of $G^*$ consists of
\begin{itemize}
\item Arcs $(s,u)$, for all $u \in P_1$;
\item Arcs $(u,v)$, where $u \in P_1$ and $v \in P_2$, for all $uv \in E(G)$;
\item Arcs $(v,t)$, for all $v \in P_2$.
\end{itemize}
We define a flow network structure on $G^*$ by assigning the capacity $r(u)$ to each arc of the form $(s,u)$, $1$ to each arc from $P_1$ to $P_2$, and $r(v)$ to each arc of the form $(v,t)$. The assignment of a flow to $G^*$ determines a subgraph $H$ of $G$ whose edges are the pairs $u,v$ for which the arc $(u,v)$ of $G^*$ has flow $1$.

\*

For a vertex $u \in P_1$, the degree of $u$ in $H$ is equal to the flow assigned to $su$ in $G^*$. Therefore we can find a subgraph $H$ of $G$ where every vertex $v$ of ${H}$ has degree equal to $r(v)$ if and only if we can assign a flow to $G^*$ such that the total flow is equal to $\sum_{v \in P_1} r(v)$. As this desired total flow is equal to the sum of the capacities of all the arcs out of $s$, this is the maximum flow that we could possibly achieve. Therefore $G$ has a subgraph $H$ with the required properties if and only if, for $G^*$,
\[\text{max flow} = \sum_{v \in P_1} r(v) \text{.}\]

From the \emph{Max-Flow Min-Cut Theorem}, this is equivalent to the condition that, for every cut,
\begin{equation} \label{max-min-eq} \text{cut capacity} \geq \sum_{v \in P_1} r(v) \text{.}\end{equation}

To complete the proof of Theorem \ref{multimatching}, we need to show that \eqref{max-min-eq} is equivalent to 
\[\sum_{v \in S} r(v) \leq \sum_{n \in \Gamma(S)} \min\{r(n), |\Gamma(n) \cap S|\} \text{,}\]
for every subset $S$ of $P_1$. 

\*

Let ${S}$ be a subset of ${P_1}$, and choose a cut ${(X,Y)}$ such that ${S = P_1 \cap X}$. We now consider the arcs that contribute to the cut capacity:

\begin{itemize}
\item If a vertex $v$ in $P_1$ is in $Y$, then the arc $(s,v)$ is from $X$ to $Y$ and has capacity ${r(v)}$.
\item If a vertex $v$ in $P_2$ is in $Y$, then any arc $(u,v)$ where $u \in S$ is from $X$ to $Y$. Each such arc has capacity $1$, and therefore the sum of these capacities for a given $v$ is equal to the number of neighbours of $v$ in ${S}$.
\item If a vertex $v$ in ${P_2}$ is in $X$, then the arc $(v,t)$ is from $X$ to $Y$ and has capacity $r(v)$.
\end{itemize}

We therefore have the following:

\[\text{cut capacity} = \sum_{v \in P_1 \cap Y} r(v) + \sum_{v \in P_2} f(v), \;\;\;\; \text{where } f(v) = 
\begin{cases} 
        r(v) & \text{if } v \in P_2 \cap X \\
       |\Gamma(v) \cap S| & \text{if } v \in P_2 \cap Y
   \end{cases}\]

Using this and \eqref{max-min-eq}, our condition for the existence of the required subgraph $H$ becomes
\[\sum_{v \in P_1} r(v) \leq \sum_{v \in P_1 \cap Y} r(v) + \sum_{v \in P_2} f(v)\text{,}\]
or equivalently,
\[\sum_{v \in S} r(v) \leq \sum_{v \in P_2} f(v) \text{,}\]
for every proper subset $S$ of $P_1$, and every choice of cut ${(X,Y)}$ with ${S = P_1 \cap X}$.

\*

The function $f$ depends on the choice of cut, and $\sum_{v \in P_2} f(v)$ is minimized by the cut $(X',Y')$ where
\[X' = S \cup \{v \in P_2 : r(v) \leq |\Gamma(v) \cap S|\}\text{.}\]
Therefore the condition for the existence of $H$ becomes the following. For all $S \subset P_1$, 
\[\sum_{v \in S} r(v) \leq \sum_{n \in \Gamma(S)} \min\{r(n), |\Gamma(n) \cap S|\}\text{.}\]
\end{proof}

As previously shown, Theorem \ref{multimatching} is a special case of the \emph{Max-Flow Min-Cut Theorem}. It is also true that \emph{Hall's Matching Theorem} \cite{hallmatchingbib} is a special case of Theorem \ref{multimatching}. A \emph{matching} in a bipartite graph is a subgraph where every vertex (in the smaller part of the bipartition) has degree ${1}$, and no vertex has degree exceeding $1$.

\begin{theorem}[Hall's Matching Theorem]  Let ${G}$ be a bipartite graph with bipartition ${(P_1, P_2)}$ such that $|P_1| \leq |P_2|$. Then ${G}$ has a matching ${H}$  if and only if every ${S \subset P_1}$ in ${G}$ satisfies
\[|S| \leq |\Gamma(S)| \text{,}\]
where ${\Gamma(S)}$ is the set of neighbours of ${S}$ in ${G}$.\end{theorem}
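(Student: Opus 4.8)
The plan is to recover Hall's Matching Theorem as the special case of Theorem \ref{multimatching} obtained by taking $r(v) = 1$ for every vertex $v$ of $G$. Before invoking the theorem I would dispose of the degenerate cases: if $P_1$ contains an isolated vertex $v$, then there is no matching (since $v$ cannot be covered) and Hall's condition fails for $S = \{v\}$, so the equivalence holds trivially; and any isolated vertex of $P_2$ lies in no set $\Gamma(S)$ and cannot be matched, so it affects neither side and may be deleted. Thus I may assume $G$ has no isolated vertices, in which case $r \equiv 1$ satisfies $0 \le r(v) \le deg_G(v)$ at every vertex, and the hypothesis $\sum_{v \in P_1} r(v) = |P_1| \le |P_2| = \sum_{v \in P_2} r(v)$ is exactly the assumption $|P_1| \le |P_2|$.

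Next I would check that the subgraph produced by Theorem \ref{multimatching} is precisely a matching. For $r \equiv 1$, the theorem yields a subgraph $H$ with $deg_H(v) = 1$ for every $v \in P_1$ and $deg_H(v) \le 1$ for every $v \in P_2$; this is exactly the definition of a matching given immediately before the statement, so the existence of the subgraph $H$ and the existence of a matching are one and the same assertion.

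It then remains to simplify the combinatorial criterion of Theorem \ref{multimatching} to Hall's condition. The left-hand side collapses to $\sum_{v \in S} r(v) = |S|$. For the right-hand side, every $n \in \Gamma(S)$ has at least one neighbour in $S$ by the definition of $\Gamma(S)$, so $|\Gamma(n) \cap S| \ge 1$ and hence $\min\{r(n), |\Gamma(n) \cap S|\} = \min\{1, |\Gamma(n) \cap S|\} = 1$. Summing over $\Gamma(S)$ gives $\sum_{n \in \Gamma(S)} 1 = |\Gamma(S)|$, so the criterion becomes $|S| \le |\Gamma(S)|$ for every subset $S$ of $P_1$, which is Hall's condition.

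I do not expect any genuine obstacle here, since the substantive work---the max-flow/min-cut argument---is already carried out in the proof of Theorem \ref{multimatching}. The only point requiring care is the bookkeeping around isolated vertices needed to legitimately apply the theorem with $r \equiv 1$ (ensuring $r(v) \le deg_G(v)$ holds at every vertex), and this is handled by the preliminary reduction above. Everything else is a direct specialisation of the general statement, with both the subgraph condition and the neighbourhood inequality reducing termwise to their classical forms.
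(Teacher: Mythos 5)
Your proposal is correct and takes exactly the paper's approach: the paper also obtains Hall's Matching Theorem by specialising Theorem \ref{multimatching} to $r(v)=1$ for all $v$, noting that $\sum_{v \in S} r(v) = |S|$ and $\sum_{n \in \Gamma(S)} \min\{r(n), |\Gamma(n) \cap S|\} = |\Gamma(S)|$. Your preliminary handling of isolated vertices (needed so that $r(v) \le deg_G(v)$ holds everywhere) is a small point of care that the paper's terse remark omits, but it does not change the argument.
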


We note that this is equivalent to Theorem \ref{multimatching}, where ${r(v) = 1}$ for all vertices ${v}$ in the graph. In this case, ${\sum_{v \in S} r(v) = |S|}$, and ${\sum_{n \in \Gamma(S)} \min\{r(n), |\Gamma(n) \cap S|\} = |\Gamma(S)|}$.

\*

We now have the following full set of necessary and sufficient conditions for when a graph has a difference-1 colouring, which follows from Lemma \ref{local-tree-lemma}, Lemma \ref{odd-even-distance-lemma}, Lemma \ref{algorithm_lemma}, Lemma \ref{common_cycle_class}, and Theorem \ref{multimatching}.

\begin{theorem}\label{general-colourability} Let ${G}$ be a bipartite graph containing at least one cycle. Then ${G}$ has a difference-1 colouring if and only if ${G}$ satisfies the following conditions:
\begin{itemize}
\item Each vertex in Sk(G) is either a leaf-type, twig-type, triple-type vertex, or a junction ${j}$ for which the local tree at $j$ in the reduced form of $G$ consists of only leaves or twigs attached to $j$ and which is assigned ${w(j) = 1}$ by Algorithm \ref{junction_weight_algorithm};
\item For each path ${P = v_1v_2 \dots v_k}$ in ${Sk(G)}$ where ${v_1}$ and ${v_k}$ are both non-leaf type vertices, and ${v_2, \dots, v_{k-1}}$ are all leaf-type vertices, ${P}$ has odd (even) length if ${v_1}$ and ${v_k}$ are the same (opposite) type;
\item Surplus weights in each common cycle class of ${G}$ are redistributable. 
\end{itemize}
\end{theorem}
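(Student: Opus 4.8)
The plan is to prove this theorem as a synthesis of the results established throughout the section, treating the two implications separately. Since difference-1 colourability is preserved under leaf-twig reduction (Lemma \ref{leaf-twig_removal_lemma}), I would first replace $G$ by its reduced form $H$, so that all statements about local trees refer to $H$; note that $Sk(H) = Sk(G)$, so the skeleton and its limbs are unaffected by this replacement.

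For the necessity direction, I would assume $G$ has a difference-1 colouring $c$ and extract each of the three listed conditions from an existing result. Condition 1 follows by combining Corollary \ref{ltt-corollary}, which classifies the degree-$2$ vertices of $Sk(G)$ as leaf-, twig-, or triple-type, with Lemma \ref{local-tree-lemma}, which forces the local tree at each junction to consist of only leaves or only twigs, and with Lemma \ref{algorithm_lemma}, which guarantees that Algorithm \ref{junction_weight_algorithm} assigns $w(j) = 1$ at every junction. Condition 2 is precisely Lemma \ref{odd-even-distance-lemma} applied to each limb. Condition 3 follows from Lemma \ref{common_cycle_class}: the colouring $c$ itself is realised as a redistribution of the surplus and zero weights within the common cycle classes of $G$, so these weights are by definition redistributable.

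For the sufficiency direction, I would assume the three conditions hold and construct a colouring. Condition 1 already includes that Algorithm \ref{junction_weight_algorithm} assigns $w(j) = 1$ at every junction, and together with condition 2 this means the algorithm runs to completion and assigns a weight to every edge: edges forced to a single colour receive weight $\pm 1$, and the remaining freedom is confined to surplus and zero weights lying within the common cycle classes. Condition 3 states exactly that these surplus weights are redistributable, so invoking the criterion of Theorem \ref{multimatching} on each common cycle class $G'$ yields a subgraph $H' \subseteq G'$ with $deg_{H'}(v) = r(v)$ for every vertex; colouring the edges of $H'$ red and the remaining edges of $G'$ blue produces an assignment in which every edge has weight $\pm 1$ while $w(v) = 1$ is retained at every vertex. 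Assembling these choices across all common cycle classes, together with the forced $\pm 1$ weights on the remaining edges, gives a global difference-1 colouring of $G$.

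The step I expect to require most care is confirming that conditions 1 and 2 are genuinely sufficient for the weight assignment of Algorithm \ref{junction_weight_algorithm} to be well-defined and consistent, so that no hidden obstruction survives beyond the three listed conditions and the surplus/zero weights are truly the only remaining degrees of freedom. The accompanying subtlety is bookkeeping: ensuring that redistributions carried out independently in distinct common cycle classes remain mutually compatible, which holds because distinct common cycle classes share no edges and every edge outside all cycles already carries a forced weight of $\pm 1$, so the local colourings assemble into a single difference-1 colouring of the whole graph.
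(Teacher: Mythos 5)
Your proposal is correct and follows essentially the same route as the paper: the paper offers no detailed proof of Theorem \ref{general-colourability}, asserting only that it follows by combining Lemma \ref{local-tree-lemma}, Lemma \ref{odd-even-distance-lemma}, Lemma \ref{algorithm_lemma}, Lemma \ref{common_cycle_class}, and Theorem \ref{multimatching}, which is exactly the synthesis you carry out (with Corollary \ref{ltt-corollary} supplying the degree-$2$ classification). Your explicit split into necessity and sufficiency, and your remarks on the consistency of the algorithm's weight assignment and the edge-disjointness of common cycle classes, merely fill in details the paper leaves implicit.
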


Recall from Theorem \ref{ced_difference-1_is_asbg} that if a cactus graph ${G}$ has a difference-1 colouring ${c}$, then $G^c$ is configurable. Therefore the Theorem \ref{general-colourability} determines when a cactus graph is ASBG-colourable. The problem of determining necessary and sufficient conditions for configurability of difference-1 colourings for wider classes of graphs is an ongoing topic of investigation.


\section{Uniqueness and Difference-k Colourings}

In this last section, we examine what it means for one graph to have more than one distinct difference-1 colouring, and how these difference-1 colourings can differ from one another. We also explore generalising the concept of a difference-1 colouring to a \emph{difference-k} colouring.

\subsection{Uniqueness of Difference-1 Colourings}

\begin{definition} We say that a graph ${G}$ has a \emph{unique colouring} if there is only one difference-1 colouring of ${G}$. \end{definition}

It is possible that a graph can have multiple difference-1 colourings, some of which are configurable and some of which are not.

\begin{example}
The following graph has two distinct difference-1 colourings, only one of which is configurable:

\includegraphics[width = \textwidth]{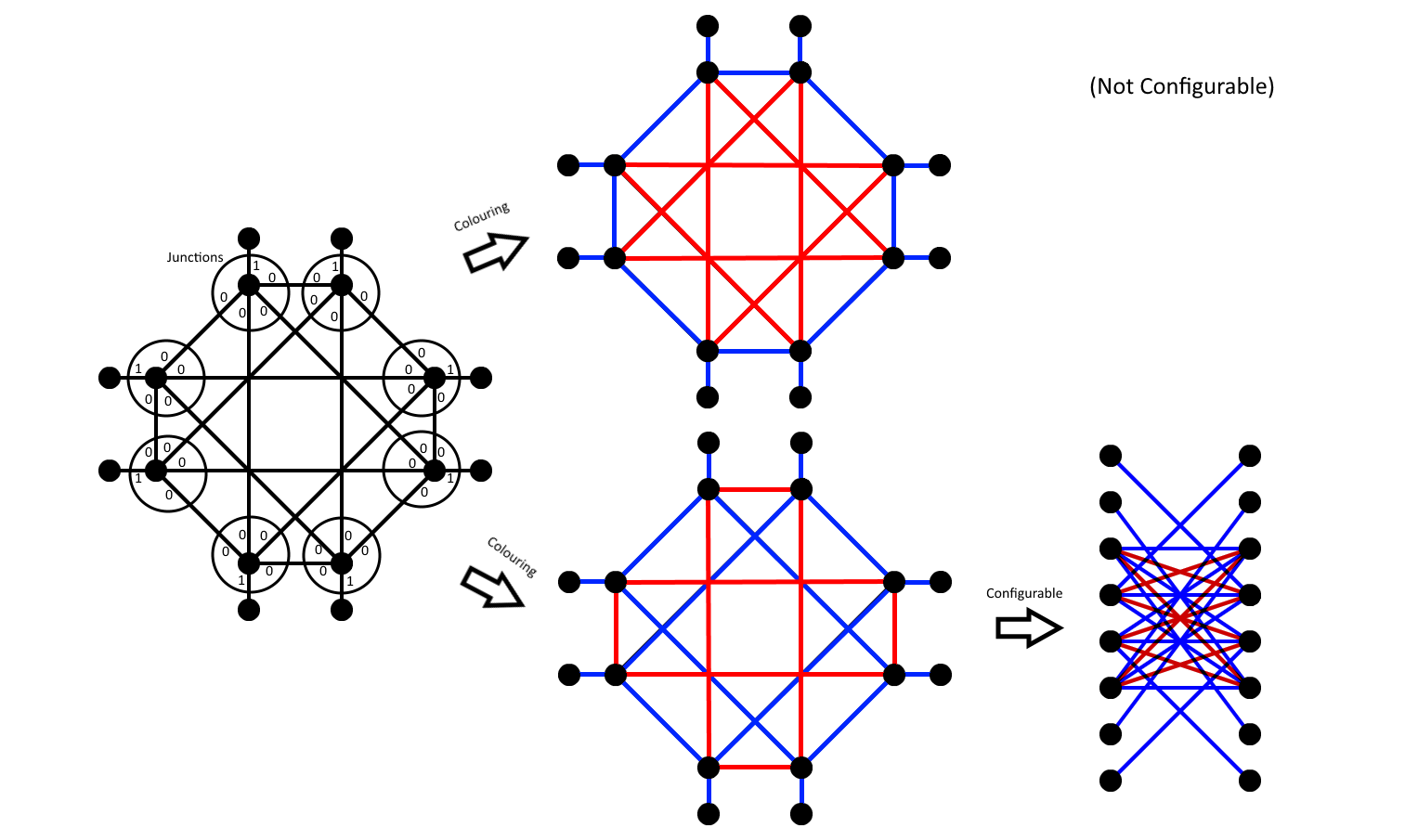}
\end{example}

\begin{definition} An \emph{alternating cycle} is a cycle in a graph which has been coloured such that each vertex of the cycle is incident with two edges in the cycle of opposite colour. \end{definition}

\begin{definition} The \emph{rotation} of an alternating cycle ${C^c}$ is the coloured cycle ${C^d}$ where every edge of ${C^d}$ is the opposite colour of the corresponding edge in ${C^c}$. \end{definition}

\begin{theorem} Any distinct colourings of a graph ${G}$ differ only by edge-disjoint alternating cycle rotations. \end{theorem}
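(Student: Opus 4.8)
The plan is to show that if $c$ and $d$ are two distinct difference-1 colourings of $G$, then the set of edges on which $c$ and $d$ disagree decomposes into edge-disjoint alternating cycles, each of which (with respect to either colouring) is a cycle whose edges alternate colours, so that passing from $c$ to $d$ is realised by rotating each of these cycles. The natural vehicle for this is exactly the $\delta$-weight argument already used in the proof of Lemma \ref{common_cycle_class}, so I would reuse that machinery rather than reinvent it.

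Concretely, I would first encode each colouring numerically: let $x_c, x_d : E(G) \to \{1,-1\}$ assign $1$ to blue edges and $-1$ to red edges. Since both $c$ and $d$ are difference-1 colourings, at every vertex $v$ we have $\sum_{e \ni v} x_c(e) = 1 = \sum_{e \ni v} x_d(e)$. Let $D$ be the subgraph consisting of the edges $e$ on which $c$ and $d$ differ, together with their incident vertices, and define $\delta(e) = x_c(e) - x_d(e)$ on $E(D)$. Then $\delta(e) \in \{2,-2\}$ is nonzero on every edge of $D$, and subtracting the two vertex equations gives $\sum_{e \ni v} \delta(e) = 0$ at each vertex $v$ of $D$. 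Hence every vertex of $D$ has equally many incident edges of $\delta$-weight $+2$ as of $\delta$-weight $-2$; in particular each such vertex has degree at least $2$ in $D$ and $D$ has no leaves.

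The heart of the argument is then the decomposition of $D$ into alternating cycles, which is precisely the construction in Lemma \ref{common_cycle_class}: starting from any edge of $D$ one builds a walk in which the sign of the $\delta$-weight alternates at each vertex (possible because every vertex meets both signs), and since $G$ is bipartite the walk closes into an even cycle in which consecutive edges have opposite $\delta$-weight. Because $\delta(e) = x_c(e) - x_d(e)$ and $x_c(e), x_d(e) \in \{1,-1\}$ with $x_c(e) \ne x_d(e)$, the sign of $\delta(e)$ records exactly whether $e$ is blue in $c$ (and red in $d$) or vice versa; thus alternation of the sign of $\delta$ around the cycle is the same as the edges of the cycle alternating in colour under $c$, making it an alternating cycle, and $d$ agrees with $c$ everywhere except that this cycle has been rotated. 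Removing this cycle from $D$ (deleting its edges) leaves a graph in which the vertex balance $\sum_{e \ni v} \delta(e) = 0$ still holds, so the process repeats and terminates, yielding an edge-disjoint collection of alternating cycles whose simultaneous rotation carries $c$ to $d$.

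I expect the main obstacle to be bookkeeping rather than depth: one must verify carefully that the cycle extracted is genuinely \emph{alternating} as a coloured cycle (not merely a cycle in $D$), i.e. that opposite $\delta$-signs on consecutive edges translate into opposite colours, and one must confirm that after deletion of a cycle the remaining edges still form a subgraph in which every vertex has balanced $\delta$-weight so that the induction goes through. Both points follow directly from the definition of $\delta$ and the vertex-balance identity, and since nearly all of this is already established inside the proof of Lemma \ref{common_cycle_class}, the cleanest write-up simply invokes that decomposition and then observes that each resulting cycle is alternating and that rotating all of them transforms $c$ into $d$.
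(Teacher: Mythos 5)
Your proposal is correct and follows essentially the same route as the paper's own proof: form the subgraph of edges on which the two colourings disagree, observe that each of its vertices is balanced (equal numbers of edges of each colour, which your $\delta$-weight identity expresses numerically), greedily extract an alternating cycle using bipartiteness to guarantee the closing edges have opposite colours, and induct after removing it. Your $\pm 1$/$\delta$-weight encoding borrowed from Lemma \ref{common_cycle_class} is only a notational repackaging of the paper's direct colour-counting argument, not a different method.
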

\begin{proof}
Let ${G}$ be a graph with two distinct colourings ${c}$ and ${d}$, and let ${H}$ be the subgraph of ${G^c}$ containing only the edges which differ in colour from those in ${G^d}$ and only vertices which are incident with these edges. Since both $c$ and $d$ are difference-1 colourings, every vertex of $H$ is incident with the same number of red and blue edges, and so every vertex of $H$ has even degree. We may start at any edge of $H$, and construct a trail consisting of edges of alternating colours until a repeated vertex $v$ is encountered for the first time. This completes an alternating cycle from $v$ to itself, since every cycle in $G$ has even length. Removal of this cycle from $H$ results in another graph in which every vertex is incident with an equal number of blue and red edges. So we can remove alternating cycles until no edges remain, and therefore two distinct colourings of a graph ${G}$ differ only by alternating cycle rotations.
\end{proof}

\begin{corollary} A difference-1 colouring of a tree ${T}$ is the unique ASBG-colouring of ${T}$. \end{corollary}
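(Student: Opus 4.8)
The plan is to combine the immediately preceding theorem (that any two distinct difference-1 colourings of a graph differ only by edge-disjoint alternating cycle rotations) with the structural fact that a tree contains no cycles at all. First I would recall that, by the Corollary to Theorem \ref{asbt-colouring}, an ASBG-colourable tree $T$ has a difference-1 colouring and, since every difference-1 colouring of a tree is configurable (Theorem \ref{ced_difference-1_is_asbg}, as a tree is trivially a cactus graph), that difference-1 colouring is in fact an ASBG-colouring. So the only thing left to establish is \emph{uniqueness}.

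For uniqueness, suppose for contradiction that $T$ admits two distinct difference-1 colourings $c$ and $d$. By the preceding theorem, these colourings must differ by at least one non-trivial edge-disjoint alternating cycle rotation; in particular, the symmetric-difference subgraph $H$ on which $c$ and $d$ disagree is non-empty and decomposes into alternating cycles. But $T$ is a tree and therefore acyclic, so $T$ contains no cycle, alternating or otherwise. Hence $H$ must be empty, forcing $c = d$. This contradicts the assumption that $c$ and $d$ are distinct, and so the difference-1 colouring of $T$ is unique.

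I do not expect any genuine obstacle here, since the result is essentially a one-line specialisation of the preceding theorem to the acyclic case; the main thing to be careful about is making the logical chain explicit. The one point worth stating cleanly is that the existence half of the corollary rests on Theorem \ref{asbt-colouring} (a tree is ASBG-colourable exactly when its reduced form is $P_2$) together with configurability of difference-1 colourings of cactus graphs, while the uniqueness half rests entirely on the alternating-cycle-rotation theorem and the acyclicity of $T$. Assembling these two observations yields the statement directly.
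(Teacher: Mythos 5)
Your proposal is correct and is essentially the argument the paper intends: the corollary is stated without proof immediately after the alternating-cycle-rotation theorem precisely because, as you observe, a tree has no cycles, so no two distinct difference-1 colourings can exist, while configurability (via Theorem \ref{ced_difference-1_is_asbg}, a tree being a trivial cactus graph) makes that colouring an ASBG-colouring. Your write-up just makes explicit the one-line reasoning the paper leaves implicit.
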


\subsection{Difference-k Colourings}

The definition and exploration of difference-1 colourings leads very naturally to the more general definition of a \emph{difference-k} colouring.

\begin{definition} \label{dif-k_def} A 2-edge colouring $c$ of a graph ${G}$ is a \emph{difference-$k$ colouring}, $k \in \mathbb{N}_0$, of $G$ if ${G}$ is bipartite and $G^c$ satisfies $deg^B(v) - deg^R(v) = k$ at every vertex $v$. \end{definition}

We propose the problem of determining when a graph admits a difference-k colouring, and note some preliminary observations in the bipartite case. The following is a characterisation of bipartite graphs with a difference-0 colouring.

\begin{theorem} A bipartite graph $G$ has a difference-0 colouring if and only if every vertex of $G$ has even degree. \end{theorem}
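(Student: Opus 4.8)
The plan is to prove both directions of this biconditional. The forward direction is immediate: if $G^c$ is a difference-$0$ colouring, then every vertex $v$ satisfies $\deg^B(v) = \deg^R(v)$, so $\deg_G(v) = \deg^B(v) + \deg^R(v) = 2\deg^B(v)$ is even. The substance of the theorem is the converse, and for this I would construct an explicit difference-$0$ colouring of any bipartite graph in which every vertex has even degree.

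**First I would** observe that the defining condition of a difference-$0$ colouring, $\deg^B(v) = \deg^R(v)$ at every vertex, is exactly the statement that the blue edges and the red edges each form a subgraph in which every vertex has degree $\tfrac{1}{2}\deg_G(v)$; equivalently, the blue subgraph and its red complement are ``balanced halves'' at each vertex. This is precisely the kind of degree-prescribed subgraph problem addressed by Theorem \ref{multimatching}: I would set $r(v) = \tfrac{1}{2}\deg_G(v)$ for every vertex $v$ (a legitimate integer, since all degrees are even), and seek a subgraph $H$ with $\deg_H(v) = r(v)$ everywhere, colouring the edges of $H$ red and the rest blue.

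**The cleanest route, however,** is via the classical observation that a connected graph in which every vertex has even degree admits an Eulerian circuit. The plan is to reduce to connected components (a colouring is a difference-$0$ colouring iff its restriction to each component is), and on each component traverse an Eulerian circuit, colouring the edges alternately blue and red as they are traversed. Since the circuit enters and leaves each vertex the same number of times, and each visit uses one incoming and one outgoing edge that receive opposite colours under the alternation, each vertex accumulates an equal number of blue and red edges. I would need to check that the total number of edges in each component is even so that the alternation closes up consistently; this follows because the component is bipartite with all degrees even, so its edge count is even (each part's edges sum to an even total).

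**The main obstacle** will be ensuring the alternating colouring along the Eulerian circuit is globally consistent at every vertex, not just locally along the trail. The potential difficulty is that a vertex is visited multiple times by the circuit, and I must verify that across all these visits the blue/red counts balance; the key point is that each \emph{visit} contributes exactly one blue and one red edge to that vertex (one edge of the pair entering, one leaving, with consecutive edges of the circuit receiving opposite colours), so the balance holds visit-by-visit and hence in total. The evenness of the component's edge count guarantees the alternation returns to its starting colour, so no single vertex is shortchanged at the closing of the circuit. Once this is verified the converse is complete.
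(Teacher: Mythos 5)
Your proof of the converse is correct, but it uses a genuinely different decomposition from the paper's. The paper invokes the fact that a graph in which every vertex has even degree decomposes into edge-disjoint cycles, and colours each cycle alternately; since $G$ is bipartite each such cycle has even length, so the alternation closes up cycle-by-cycle, and every cycle through a vertex contributes one blue and one red edge there. You instead take a single Eulerian circuit in each connected component and alternate colours along the traversal, which shifts the parity burden from individual cycles to the whole component; you discharge it correctly by observing that a bipartite graph with all degrees even has an even number of edges per component (sum the degrees over one side of the bipartition), so the alternation is consistent at the closing of the circuit, and each visit to a vertex pairs two consecutive, hence oppositely coloured, edges. The trade-off: the cycle-decomposition route needs no global parity check and no connectivity hypothesis, whereas your circuit route needs both the explicit reduction to components (which you do, and which is in fact tidier than the paper's bare assertion that ``$G$ is Eulerian'') and the edge-count parity observation. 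Both arguments use bipartiteness essentially, as they must, since a triangle has all even degrees but admits no difference-0 colouring; the paper uses it implicitly (alternating cycles must have even length) while you use it explicitly. Your side remark that one could instead apply Theorem \ref{multimatching} with $r(v) = deg_G(v)/2$ is also sound, and is essentially the route the paper itself takes for general difference-$k$ colourings in Theorem \ref{dif-k}; the Eulerian argument you chose is the more elementary and self-contained of the two.
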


\begin{proof}
First, assume that $G$ has a difference-0 colouring. Then $deg^B(v) - deg^R(v) = 0 \implies deg^B(v) = deg^R(v) \implies deg(v) = 2deg^B(v)$. So every vertex of $G$ has even degree.

\*

Now assume that every vertex of $G$ has even degree. As $G$ is \emph{Eulerian}, the edges of $G$ can be decomposed into edge-disjoint cycles. We now colour the edges of $G$ so that each cycle is an alternating cycle. Therefore $deg^B(v) = deg^R(v)$ at each vertex $v$. So $G$ has a difference-0 colouring.
\end{proof}

We can use Theorem \ref{multimatching} to give a general characterisation for all bipartite graphs with difference-k colourings, for a given $k$ as follows.

\begin{theorem}\label{dif-k} A bipartite graph $G$ with bipartition $(P_1,P_2)$ has a difference-k colouring if and only if every $S \subset P_1$ satisfies
\[\sum_{v \in S} \frac{deg(v)-k}{2} \leq \sum_{n \in \Gamma(S)} \min\Big\{\frac{deg(n)-k}{2}, |\Gamma(n) \cap S|\Big\}\text{,}\]
where ${\Gamma(S)}$ is the set of neighbours of elements of ${S}$ in ${G}$, and ${\Gamma(n)}$ is the set of neighbours of ${n}$.\end{theorem}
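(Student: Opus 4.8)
The plan is to reduce Theorem \ref{dif-k} to Theorem \ref{multimatching} by choosing the right values of $r(v)$. In a difference-$k$ colouring we need $deg^B(v) - deg^R(v) = k$ at every vertex, and since $deg^B(v) + deg^R(v) = deg(v)$, this forces $deg^R(v) = \frac{deg(v)-k}{2}$ at each vertex. The natural move is therefore to set $r(v) = \frac{deg(v)-k}{2}$ for every vertex $v$, and to identify a difference-$k$ colouring with a subgraph $H$ consisting exactly of the red edges: a $2$-colouring with $deg^B(v)-deg^R(v)=k$ at every vertex is equivalent to a subgraph $H$ with $deg_H(v) = r(v)$ at every vertex (colour the edges of $H$ red and everything else blue).

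First I would record the necessary integrality and range conditions: for $r(v) = \frac{deg(v)-k}{2}$ to be a usable value we need $deg(v) \equiv k \pmod 2$ and $0 \le r(v) \le deg(v)$; I would note that these are implicitly required for a difference-$k$ colouring to exist and so may be assumed (or folded into the hypotheses). Next I would check that the ``balance'' hypothesis of Theorem \ref{multimatching}, namely $\sum_{v \in P_1} r(v) \le \sum_{v \in P_2} r(v)$, can be arranged. Because $G$ is bipartite, every edge has one endpoint in each part, so $\sum_{v\in P_1} deg(v) = \sum_{v\in P_2} deg(v) = |E(G)|$; consequently $\sum_{v\in P_1} r(v) = \sum_{v\in P_1}\frac{deg(v)-k}{2} = \frac{|E(G)| - k|P_1|}{2}$ and similarly for $P_2$. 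These two sums need not be equal, but we are free to label the parts so that the one with fewer vertices is $P_1$, giving $\sum_{v\in P_1} r(v) \ge \sum_{v \in P_2} r(v)$ or its reverse; I would choose the labelling so that Theorem \ref{multimatching}'s inequality holds, and note that the Hall-type condition in the conclusion is symmetric enough in this degree-counting setting to be stated over $P_1$.

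The main point, and the step I expect to need the most care, is the equivalence between a subgraph $H$ with $deg_H(v) = r(v)$ at \emph{all} vertices (both parts) and the subgraph guaranteed by Theorem \ref{multimatching}, which only asserts $deg_H(v) = r(v)$ for $v \in P_1$ and $deg_H(v) \le r(v)$ for $v \in P_2$. The resolution is the global edge count: since $H \subseteq G$ and $G$ is bipartite, $\sum_{v\in P_1}deg_H(v) = \sum_{v\in P_2}deg_H(v)$, and in the balanced regime $\sum_{v\in P_1}r(v) = \sum_{v\in P_2}r(v)$, the inequalities $deg_H(v) \le r(v)$ on $P_2$ must in fact be equalities. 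The difference-$k$ problem is exactly this balanced case, paralleling the redistributability remark following Theorem \ref{multimatching}; so whenever the degree sums over the two parts coincide, Theorem \ref{multimatching} delivers the two-sided-exact subgraph we need. I would then substitute $r(v) = \frac{deg(v)-k}{2}$ into the Hall-type inequality of Theorem \ref{multimatching} to obtain the displayed condition verbatim, completing both directions of the equivalence.

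The hard part is thus not a calculation but the careful handling of the asymmetry between the ``exact on $P_1$, at-most on $P_2$'' conclusion of Theorem \ref{multimatching} and the ``exact everywhere'' requirement of a difference-$k$ colouring; I would make explicit that the degree-sum identity collapses the inequalities to equalities precisely when $\sum_{P_1} r = \sum_{P_2} r$, and that this is automatic here because $r(v) = \frac{deg(v)-k}{2}$ together with $\sum_{P_1} deg = \sum_{P_2} deg$ forces $\sum_{P_1}r - \sum_{P_2}r = \frac{k}{2}(|P_2|-|P_1|)$, which vanishes exactly when the two parts are balanced—and for a difference-$k$ colouring to exist at all the parts must indeed be balanced (summing $deg^B(v)-deg^R(v)=k$ over each part gives $k|P_1| = b - r = k|P_2|$ when $k \ne 0$, recovering $|P_1|=|P_2|$).
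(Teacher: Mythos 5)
Your proposal is correct and takes essentially the same route as the paper: encode the red edges of a difference-$k$ colouring as a subgraph $H$ with $deg_H(v) = \frac{deg(v)-k}{2}$ at every vertex, and apply Theorem \ref{multimatching} with $r(v) = \frac{deg(v)-k}{2}$. You are in fact more careful than the paper's brief proof, which silently passes over exactly the points you isolate --- the integrality/parity of $\frac{deg(v)-k}{2}$, the balance hypothesis $\sum_{v \in P_1} r(v) \leq \sum_{v \in P_2} r(v)$, and the edge-counting argument that collapses the ``exact on $P_1$, at most on $P_2$'' conclusion of Theorem \ref{multimatching} to equality on both sides --- the last of which the paper only gestures at in its remark on redistributability following Theorem \ref{multimatching}.
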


\begin{proof}
A bipartite graph $G$ having a difference-k colouring is equivalent to $G$ having a subgraph $H$ where every vertex $v$ of $H$ satisfies $deg_H(v) = \frac{deg_G(v)-k}{2}$. This is because if we find such a subgraph $H$, the colouring $c$ of the edges of $G$ such that all edges of $H^c$ are red and all edges of $(G \setminus H)^c$ are blue is a difference-k colouring of $G$. From Theorem \ref{multimatching}, we know that this is equivalent to every strict subset $S$ of one bipartition of $G$ satisying
\[\sum_{v \in S} \frac{deg(v)-k}{2} \leq \sum_{n \in \Gamma(S)} \min\Big\{\frac{deg(n)-k}{2}, |\Gamma(n) \cap S|\Big\}\text{.}\]
\end{proof}

While Theorem \ref{dif-k} arises directly from Theorem \ref{multimatching}, and can be applied to the case of difference-1 colourings, its practical utility is limited. Algorithm \ref{junction_weight_algorithm} and partitioning the edge set into common cycle classes simplifies the analysis considerably in the case $k=1$. At present, we have no version of these approaches for difference-k colourings in general. The problem of determining when a non-bipartite graph admits a difference-k colouring remains largely unexplored, even in the case $k=1$.

\newpage

\end{document}